\title{Nonnegative kernels and $1$-rectifiability in the Heisenberg group}
\address{University of Connecticut, Department of Mathematics}
\address{University of Chicago, Department of Mathematics}
\subjclass[2010]{28A75 (Primary), 28C10, 35R03 (Secondary)}
\author{Vasileios Chousionis}
\author{Sean Li}
\thanks{VC was supported by the Academy of Finland through the grant \emph{Geometric harmonic analysis}, grant number 267047. S.L. is supported by NSF grant DMS-1600804. }
\email{vasileios.chousionis@uconn.edu}
\email{seanli@math.uchicago.edu}
\newcommand{\ve}{\varepsilon}
\renewcommand{\H}{\mathbb{H}}
\newcommand{\R}{\mathbb{R}}
\newcommand{\Rn}{\mathbb{R}^n}
\newcommand{\N}{\mathbb{N}}
\newcommand{\C}{\mathbb{C}}
\newcommand{\Z}{\mathbb{Z}}
\renewcommand{\L}{\mathcal{L}}
\newcommand{\Hd}{\mathcal{H}}
\newcommand{\ha}{\mathcal{H}}
\newcommand{\diam}{\operatorname{diam}}
\newcommand{\He}{\mathbb{H}}
\newcommand{\stm}{\setminus}
\newcommand{\ra}{\rightarrow}
\numberwithin{equation}{section}
\newtheorem{thm}{Theorem}[section]
\newtheorem{lemma}[thm]{Lemma}
\newtheorem{corollary}[thm]{Corollary}
\theoremstyle{definition}
\newtheorem{proposition}[thm]{Proposition}
\theoremstyle{definition}
\newtheorem{df}[thm]{Definition}
\theoremstyle{definition}
\theoremstyle{definition}
\newtheorem{remark}[thm]{Remark}
\begin{document}

\begin{abstract} Let $E$ be an $1$-Ahlfors regular subset of the Heisenberg group $\H$. We prove that there exists a $-1$-homogeneous kernel $K_1$ such that if $E$ is contained in a $1$-regular curve the corresponding singular integral is bounded in $L^2(E)$.  Conversely, we prove that there exists another $-1$-homogeneous  kernel $K_2$, such that the $L^2(E)$-boundedness of its corresponding singular integral implies that $E$ is contained in an $1$-regular curve. These are the first non-Euclidean examples of kernels with such properties. Both $K_1$ and $K_2$ are weighted versions of the Riesz kernel corresponding to the vertical component of $\H$. Unlike the Euclidean case, where all known kernels related to rectifiability are antisymmetric, the kernels $K_1$ and $K_2$ are even and nonnegative.
\end{abstract}

\maketitle

\section{Introduction}

One of the standard topics in classical harmonic analysis is the study of singular integral operators 
(SIOs) of the form
$$Tf\,(x)=\int \frac{\Omega(x-y)}{|x-y|^{n}}f(y)\, d \L^n(y)$$
where $\Omega$ is a $0$-homogeneous function  and $\L^n$ is the Lebesgue measure in $\Rn$, see e.g. \cite{St}. A considerable amount of research has been devoted to such SIO's, and nowdays they are well understood. On the other hand if the singular integral is defined on lower dimensional measures the situation is much more complicated even when one considers the simplest of kernels. 

As an example the reader should think of the Cauchy transform $$C_E f\,(z)=\int_E \frac{f(w)}{z-w} d \ha^1 (w), \, E \subset \C,$$  where $\ha^1$ denotes the   $1$-dimensional Hausdorff measure in the complex plane. Two questions  arise naturally. For which sets $E$ is $C_E$ bounded in $L^2(E)$? And, if  $C_E$ is bounded in $L^2(E)$ what does this imply about $E$? Here $L^2(E)$-boundedness means that there exists a constant $C>0$ such that the truncated operator $$C_{E}^{\ve}f\,(z)=\int_{E \stm B(z,\ve)} \frac{f(w)}{z-w} d \ha^1 (w)$$ satisfies $\|C_{E}^{\ve}f\|_{L^2(\ha^1 |_E)} \leq C \|f\|_{L^2(\ha^1 |_E)}$ for all $f \in L^2(\ha^1 |_E)$. It turns out that the $L^2(E)$-boundedness of the Cauchy transform depends crucially on the geometric structure of $E$. 

The problem of exploring this relation has a long history and it is deeply related to rectifiability and analytic capacity; we refer to the recent book of Tolsa \cite{tolsabook} for an extensive treatment. One of the landmarks in the field was the characterization of the $1$-(Ahlfors-David)-regular sets $E$ on which the Cauchy transform is bounded in $L^2(E)$. Recall that  an $\ha^{1}$-measurable set $E$ is \emph{
$1$-(Ahlfors-David)-regular}, if there exists a constant $1\leq C
<\infty$, such that
\begin{displaymath}
C^{-1} r \leq \mathcal{H}^1(B(x,r)\cap E) \leq C r
\end{displaymath}
for all  $x\in E$, and $0<r\leq \mathrm{diam}(E)$. It turns out that that if $E$ is $1$-regular the Cauchy transform $C_E$ is bounded in $L^2(E)$ if and only if $E$ is contained in an $1$-regular curve. The sufficient condition is due to David \cite{Dsuff} and it even holds for more general smooth antisymmetric kernels. The necessary condition is due to Mattila, Melnikov and Verdera \cite{MMV}. It is a remarkable fact that their proof depends crucially on a special subtle positivity property of the Cauchy kernel related to an old notion of curvature named after Menger, see e.g., \cite{MeV} or \cite{MMV}. We also note that the above characterization also holds for the SIOs associated to the coordinate parts of the Cauchy kernel.

Very few things are known for the action of SIOs associated with other $-1$-homogeneous, $1$-dimensional Calder\'on-Zygmund kernels (see Section \ref{sec:prelim} for the exact definition) on $1$-AD regular sets in the complex plane. Call a kernel ``good'' if its associated SIO is bounded on $L^2(E)$ if and only if $E$ is contained in an $1$-regular curve.  It is noteworthy that all known good or bad kernels are related to the kernels
\begin{equation*}
\label{kn}
k_{n}(z)=\frac{x^{2n-1}}{|z|^{2n}}, \quad z =(x,y) \in \C\setminus \{0\}, n \in \N.
\end{equation*} 
Observe that $k_1$ is a good kernel as it is the $x$-coordinate of the Cauchy kernel, see \cite{MMV}. It was shown in \cite{CMPT} that  the kernels $k_n, n>1,$ are good as well, and these were the first non-trivial examples of good kernels not directly related with the Cauchy kernel. Now let 
$$\kappa_t(z)=k_2(z)+t\cdot k_1(z), t \in \R.$$ It follows by \cite{CMPT} and \cite{MMV} that $\kappa_t$ is good for $t>0$. Recently Chunaev \cite{chun} showed that $\kappa_t$ is good for $t \leq-2$ and Chunaev, Mateu and Tolsa \cite{chumt} proved that $\kappa_t$ is good for $t\in (-2,-\sqrt{2})$. For $t=-1$ and $t=-3/4$ there exist intricate examples of sets $E$, due to Huovinen \cite{huo} and Jaye and Nazarov \cite{jnaz} respectively, which show that the $L^2(E)$ boundedness of the SIO associated to $\kappa_{-1}$ and $\kappa_{-3/4}$ does not imply rectifiability for $E$.

Notice that all the kernels mentioned so far are odd and this is very reasonable. Consider for example an $1$-dimensional Calder\'on-Zygmund kernel $k: \R \times \R \stm\{x=y\} \ra \R^+$ which is not locally integrable along the diagonal. Think for example $k(x,y)=|x-y|^{-1}. $ Then $\int_I k(x,y) dy=\infty$ for all open intervals $I \subset \R$. It becomes evident that if one wishes to define a SIO which makes sense on lines and other ``nice'' $1$-dimensional objects, depends crucially on the cancellation properties of the kernel. Surprisingly in the Heisenberg group $\H$ the situation is very different. 

The \emph{Heisenberg group} $\He$ is $\mathbb{R}^3$
endowed with the group law
\begin{equation}\label{eq:Heis}
p\cdot q =(x+x',y+y',z+z'+(xy'-yx')/2)
\end{equation}
for $p=(x,y,t),q=(x',y',t') \in \mathbb{R}^3$.  We use the following metric on $\He$:
\begin{displaymath}
d_\H:\He\times \He\to [0,\infty),\quad
d_\H(p,q):= N(q^{-1} \cdot p),
\end{displaymath}
where $N: \H \ra [0,\infty)$ is the  \emph{Kor\'anyi norm} in $\H$,
\begin{align*}
  N(x,y,z) := ((x^2 + y^2)^2 + z^2)^{1/4}.
\end{align*}
We also let
\begin{align*}
  NH(x,y,z) = |z|^{1/2},
\end{align*}
where $NH$ stands for non-horizontal.  Note that
\begin{align*}
  d_\H(x,y) = (|\pi(x) - \pi(y)|^4 + NH(x^{-1}y)^4)^{1/4}.
\end{align*}	
We also remark that the metric $d_{\mathbb{H}}$ is  homogeneous with respect to the dilations
\begin{displaymath}
\delta_r:\He \to \He,\quad \delta_r((x,y,z))
=(rx,ry,r^2 z),\quad (r>0).
\end{displaymath}
Finally let $\Omega: \H \stm \{0\} \ra [0, \infty)$,
\begin{equation}
\label{omega}
\Omega (p)=\frac{NH(p)}{N(p)}
\end{equation}
and notice that $\Omega$ is $0$-homogeneous as $\Omega(\delta_r(p))= \Omega(p)$ for all $r>0$.

In our first main theorem we prove that, in contrast to the Euclidean case, there exists a nonnegative, $-1$ homogeneous, Calder\'on-Zygmund kernel which is bounded in $L^2(E)$ for every $1$-regular set $E$  which is contained in a $1$-regular curve. We warn the reader that  from now on $\ha^1$ will denote the $1$-dimensional Hausdorff measure in $(\H, d_\H)$.

\begin{thm}
\label{necesthm}
Let $K_1 : \H \stm \{0\} \ra [0,\infty)$ defined by
$$K_1(p)=\frac{\Omega(p)^8}{N(p)},$$
and let $E$ be a $1$-regular set $E$  which is contained in a $1$-regular curve.  Then the corresponding truncated singular integrals
$$T^\ve_1f\,(p)=\int_{E \stm B_{\H}(p,\ve)} K_1(q^{-1} \cdot p) f(q) \,d \ha^1(q)$$
are uniformly bounded in $L^2(E)$.
\end{thm}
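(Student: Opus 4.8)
The plan is to reduce, in two standard steps, to a single geometric Carleson-type estimate on a $1$-regular curve, and then to prove that estimate.

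First, since $E$ is contained in a $1$-regular curve $\Gamma$ and $\ha^1|_E \le \ha^1|_\Gamma$, the restriction property of Calder\'on--Zygmund operators reduces the claim to the uniform $L^2(\ha^1|_\Gamma)$-boundedness of the truncations of the operator with kernel $K_1(q^{-1}\cdot p)$. Note that $K_1(p) = NH(p)^8/N(p)^9$ is smooth on $\H\setminus\{0\}$, is $(-1)$-homogeneous with respect to $\{\delta_r\}$, satisfies $0\le K_1\le N^{-1}$, and that the kernel $(p,q)\mapsto K_1(q^{-1}\cdot p)$ is symmetric because $K_1(p^{-1})=K_1(p)$; thus it is a symmetric, \emph{nonnegative} Calder\'on--Zygmund kernel. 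Applying the $T(1)$-theorem on the space of homogeneous type $(\Gamma,d_\H,\ha^1|_\Gamma)$ (equivalently, a direct corona argument on the curve), and using $K_1\ge0$ together with the symmetry -- so that the weak boundedness property, $T_1 1$, and $T_1^*1=T_1 1$ are all governed by double integrals of $K_1$, while the ``far'' oscillation of $T_1 1$ is controlled as usual by the H\"older regularity of $K_1$ -- everything reduces to the estimate
\begin{equation}\label{eq:target}
\sup_{x_0\in\Gamma,\ r>0}\ \frac1r \int_{B_\H(x_0,r)\cap\Gamma}\ \int_{B_\H(x_0,2r)\cap\Gamma} K_1(q^{-1}\cdot p)\, d\ha^1(q)\, d\ha^1(p)\ <\ \infty .
\end{equation}

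To analyze \eqref{eq:target} I would parametrize $\Gamma$. A connected $1$-regular set has Hausdorff dimension $1$ in $(\H,d_\H)$, hence is a horizontal curve; write it as a unit-speed horizontal path $\gamma\colon I\to\H$, so that $\pi\circ\gamma$ is a unit-speed planar path and, by $1$-regularity, $|\{u:\gamma(u)\in B_\H(\gamma(s),\rho)\}|\lesssim\rho$. The essential elementary fact is the \emph{area identity}: by the group law \eqref{eq:Heis} and horizontality, the non-horizontal component of $\gamma(u)^{-1}\cdot\gamma(s)$ equals the signed symplectic area $\mathcal{A}(u,s)$ enclosed by the arc $\pi\gamma|_{[u,s]}$ together with the chord $[\pi\gamma(u),\pi\gamma(s)]$, so that $NH(\gamma(u)^{-1}\cdot\gamma(s))^2=|\mathcal{A}(u,s)|$ while $N(\gamma(u)^{-1}\cdot\gamma(s))=(|\pi\gamma(s)-\pi\gamma(u)|^4+\mathcal{A}(u,s)^2)^{1/4}$. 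In particular $|\mathcal{A}(u,s)|\le N(\gamma(u)^{-1}\cdot\gamma(s))^2$, a purely Heisenberg constraint that prevents the curve from folding ``thinly'' at a given scale, and which will be used to keep the arcs relevant below from wandering far from $p$.

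Next, by Tonelli and the symmetry of the kernel, the inner double integral in \eqref{eq:target} is $\lesssim \int_{B_\H(x_0,r)\cap\Gamma}\Phi_r(p)\,d\ha^1(p)$ with $\Phi_r(p):=\int_{B_\H(p,3r)\cap\Gamma}K_1(q^{-1}\cdot p)\,d\ha^1(q)$. Decompose $\Phi_r(p)$ into Kor\'anyi annuli $A_j(p)=\{q:2^j\le d_\H(p,q)<2^{j+1}\}$ with $2^j\lesssim r$; there $N(q^{-1}\cdot p)\asymp 2^j$ and $\ha^1(A_j(p)\cap\Gamma)\lesssim 2^j$. If $\Gamma$ is $\beta$-flat near $p$ at scale $2^j$ -- its arc in $B_\H(p,C2^j)$, and hence the relevant chords, lying in a $\beta 2^j$-neighborhood of a horizontal line, with the exceptional ``far-wandering'' contributions to $A_j(p)$ absorbed using $1$-regularity and $K_1\le N^{-1}$ -- then the area identity gives $|\mathcal{A}(u,s)|\lesssim \beta 2^j\cdot 2^j$ for the pairs contributing to $A_j(p)$, whence $K_1(q^{-1}\cdot p)=NH(q^{-1}\cdot p)^8\,N(q^{-1}\cdot p)^{-9}\lesssim(\beta 2^{2j})^4\,2^{-9j}=\beta^4\,2^{-j}$ there. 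Therefore
\begin{equation*}
\int_{A_j(p)\cap\Gamma} K_1(q^{-1}\cdot p)\, d\ha^1(q)\ \lesssim\ \beta(p,2^j)^4\ \le\ \beta(p,2^j)^2 ,
\end{equation*}
the last step using only that the flatness numbers are bounded; this is exactly where the large exponent $8$ in the definition of $K_1$ is exploited, as a bound by $\beta^2$ alone would not close the argument. Summing over $j$ with $2^j\lesssim r$ yields
\begin{equation*}
\int_{B_\H(x_0,r)\cap\Gamma}\Phi_r(p)\, d\ha^1(p)\ \lesssim\ \sum_{j\,:\,2^j\lesssim r}\ \int_{B_\H(x_0,r)\cap\Gamma}\beta(p,2^j)^2\, d\ha^1(p) .
\end{equation*}

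The main obstacle is the geometric heart of the last step: showing that the right-hand side is $\lesssim r$, i.e.\ that the flatness numbers of a $1$-regular curve in $\H$ satisfy the Carleson packing condition $\sum_{2^j\le r}\int_{B_\H(x_0,r)\cap\Gamma}\beta(p,2^j)^2\,d\ha^1(p)\lesssim r$ (a ``weak geometric lemma''), \emph{together with} making rigorous the bookkeeping above -- in particular that, via the area identity, $1$-regularity in $d_\H$ controls the non-horizontal displacement $NH(q^{-1}\cdot p)$ by the Euclidean flatness of the projected curve and rules out the pathological configurations. I would obtain the packing estimate from a corona/stopping-time decomposition of $\Gamma$ into pieces that, at each scale, are close to horizontal lines -- available because $1$-regular curves in $\H$ are uniformly rectifiable -- on each of which $\mathcal{A}$ and the horizontal deviation are both controlled by the flatness of the nearly linear approximating arc, the errors summing by a Carleson estimate. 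Everything else -- the $T(1)$ wrapper, the far-part oscillation, the weak boundedness property -- is routine once \eqref{eq:target} is established.
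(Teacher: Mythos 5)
Your outline reproduces much of the paper's strategy (a $T(1)$ reduction exploiting positivity and symmetry, plus a per-annulus bound of the kernel by a power of a horizontal $\beta$-number), but the step you yourself flag as the geometric heart rests on an estimate that is not available in the Heisenberg group. You need the Carleson packing condition $\sum_{2^j\lesssim r}\int_{B\cap\Gamma}\beta(p,2^j)^2\,d\ha^1(p)\lesssim r$, i.e.\ a $\beta^2$ ``weak geometric lemma'' for $1$-regular horizontal curves, and you propose to extract it from a corona decomposition ``because $1$-regular curves in $\H$ are uniformly rectifiable.'' That is exactly what does not transfer from $\R^n$: with horizontal $\beta$-numbers the known traveling salesman estimate of Li--Schul \cite{li-schul-1}, which is what the paper uses (Proposition \ref{p:tsp}), carries the exponent $4$, not $2$, and no $\beta^2$ geometric lemma is known for horizontal curves (the exponent $4$ is not expected to be improvable to $2$; the whole point of taking $\Omega^8$ in $K_1$ is to land on $\beta^4$, the power for which a Carleson condition actually holds). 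Your argument is also internally inconsistent on this point: you correctly obtain the per-annulus bound $\int_{A_j(p)\cap\Gamma}K_1\,d\ha^1\lesssim\beta(p,2^j)^4$ (this is the paper's Lemma \ref{l:Tj-beta}, proved there more cleanly from the two-point inequality of Lemma \ref{l:2pt-NH}, with no need for the arc/area bookkeeping and the ``far-wandering'' exceptional set you gesture at), and then you discard precisely the gain that the eighth power was introduced to produce by weakening $\beta^4\le\beta^2$ --- while simultaneously asserting that ``a bound by $\beta^2$ alone would not close the argument.''

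The repair is to keep $\beta^4$ and invoke the Heisenberg traveling salesman theorem: summing the per-annulus bound over scales and integrating over the ball gives exactly the quantity controlled by \eqref{e:tsp}, and then your Carleson-type target estimate follows. With that correction your route differs from the paper's only in the wrapper: you verify WBP and $T1\in BMO$ via an $L^1$-testing condition (legitimate for a positive symmetric CZ kernel), whereas the paper verifies the $L^2$ testing condition $\|T_1^\ve\chi_S\|_{L^2(S)}^2\lesssim\mu(S)$ directly, expanding the square into diagonal and off-diagonal terms and using positivity together with the $\beta^4$ packing twice. As written, however, the proof does not close, because its key packing estimate is unproved and, at the exponent you claim, beyond (indeed contrary to) what is known in $\H$.
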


We define the principal value of $f$ at $p$ to be
\begin{align*}
    \mathrm{p.v.}T_1f(p) = \lim_{\ve \to 0} T^\ve_1(f)(p),
\end{align*}
when the limit exists.  Because the kernel is positive, we will be able to use Theorem \ref{necesthm} to easily show that the principal value operator is bounded in $L^2$.

\begin{corollary}
\label{pvcor}
If $f \in L^2(E)$, then p.v.$T_1f(x)$ exists almost everywhere and is in $L^2(E)$.  Moreover, we have that there exists a constant $C > 0$ so that 
$$\|\mathrm{p.v.}T_1f\|_{L^2(E)} \leq C \|f\|_{L^2(E)}, \qquad \forall f \in L^2(E).$$
\end{corollary}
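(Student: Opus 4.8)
The plan is to exploit the nonnegativity of $K_1$: for $f \geq 0$ the truncations $T_1^\ve f$ are monotone in $\ve$, so the principal value exists pointwise as a supremum, and the $L^2$-bound passes to the limit by monotone convergence, using Theorem \ref{necesthm}. The general case then follows by splitting $f$ into positive and negative parts.

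In detail, I would first take $f \geq 0$. Since $K_1 \geq 0$ the integrand in $T_1^\ve f(p)$ is nonnegative, and as $\ve$ shrinks the domain $E \stm B_\H(p,\ve)$ grows; hence $\ve \mapsto T_1^\ve f(p)$ is nonincreasing, and
$$\mathrm{p.v.}T_1 f(p) = \lim_{\ve \to 0} T_1^\ve f(p) = \sup_{\ve > 0} T_1^\ve f(p) \in [0,\infty]$$
exists for every $p$, the limit being the same along any sequence $\ve_n \downarrow 0$. Applying the monotone convergence theorem to $(T_1^{\ve_n}f)^2$ together with the uniform bound of Theorem \ref{necesthm} yields
$$\|\mathrm{p.v.}T_1 f\|_{L^2(E)}^2 = \lim_{n\to\infty} \|T_1^{\ve_n} f\|_{L^2(E)}^2 \leq C^2 \|f\|_{L^2(E)}^2,$$
so $\mathrm{p.v.}T_1 f$ is finite $\ha^1$-a.e.\ on $E$ and lies in $L^2(E)$ with $\|\mathrm{p.v.}T_1 f\|_{L^2(E)} \leq C\|f\|_{L^2(E)}$.

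For general $f \in L^2(E)$ I would write $f = f^+ - f^-$ with $f^\pm \geq 0$ and $\|f^\pm\|_{L^2(E)} \leq \|f\|_{L^2(E)}$. By the previous step, $\mathrm{p.v.}T_1 f^{\pm}(p)$ exist and are finite for $\ha^1$-a.e.\ $p$; since $T_1^\ve f = T_1^\ve f^+ - T_1^\ve f^-$ by linearity of the integral, at each such $p$ the limit $\mathrm{p.v.}T_1 f(p)$ exists and equals $\mathrm{p.v.}T_1 f^+(p) - \mathrm{p.v.}T_1 f^-(p)$. The triangle inequality then gives
$$\|\mathrm{p.v.}T_1 f\|_{L^2(E)} \leq \|\mathrm{p.v.}T_1 f^+\|_{L^2(E)} + \|\mathrm{p.v.}T_1 f^-\|_{L^2(E)} \leq 2C\|f\|_{L^2(E)},$$
which is the asserted estimate after renaming the constant.

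I do not expect a genuine obstacle here: the only delicate point, interchanging the limit in $\ve$ with the integral, is trivial because positivity of the kernel makes $\ve \mapsto T_1^\ve f(p)$ monotone for $f \geq 0$, reducing matters to the monotone convergence theorem. This is exactly the payoff of having a nonnegative kernel — unlike the antisymmetric Euclidean situation, no Cotlar-type inequality or maximal-function estimate is needed to deduce $L^2$-boundedness of the principal value operator from uniform $L^2$-bounds on the truncations.
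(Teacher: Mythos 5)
Your proof is correct and follows essentially the same route as the paper: positivity of $K_1$ gives monotonicity of the truncations for $f\geq 0$, the uniform bound of Theorem \ref{necesthm} passes to the limit (the paper uses Fatou where you use monotone convergence, an immaterial difference), and the general case is handled by the same $f=f^+-f^-$ decomposition with the same constant $2C$.
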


Let us quickly give an intuition behind why one would expect a positive kernel like $NH(x)^m/N(x)^{m+1}$ to be bounded on Lipschitz curves.  Rademacher's theorem says that Lipschitz curves in $\R^n$ infinitesimally resemble affine  lines and antisymmetric kernels cancel on affine lines.  This is essentially what controls the singularity.  In the Heisenberg setting, a Rademacher-type theorem by Pansu \cite{Pansu} says that Lipschitz curves infinitesimally resemble {\it horizontal} lines and $NH$ is $0$ on horizontal lines.  Thus, we again have control over the singularity.


Some heuristic motivation comes from the fact that the positive Riesz kernel $\frac{|z|}{(x^2 + y^2 + z^2)^{3/2}}$ defines a SIO which is trivially bounded in $\R^3$ for curves in the $xy$-plane.  In this case, however, boundedness of this SIO tells us nothing about the regularity of the $xy$ curve.  An analogous concern in the Heisenberg group would be whether boundedness of kernels of the form $NH(z)^p/N(z)^{p+1}$ imply anything about the regularity of the sets if the vertical direction is ``orthogonal'' to Lipschitz curves.  While we do not know if boundedness of the kernel of Theorem \ref{necesthm} says anything about regularity, our next result shows that there exists some $p$ for which these vertical Riesz kernels do:

\begin{thm}
\label{suffthm}
Let $K_2: \H \stm \{0\} \ra [0,\infty)$ defined by
$$K_2(p)=\frac{\Omega(p)^2}{N(p)},$$
and let $E$ be a $1$-regular set $E$. If the corresponding truncated singular integrals
$$T^\ve_2 f\,(p)=\int_{E \stm B_{\H}(p,\ve)} K_2(q^{-1} \cdot p) f(q) d \ha^1(q)$$ are uniformly bounded in $L^2(E)$ then $E$  is contained in a $1$-AD regular curve.
\end{thm}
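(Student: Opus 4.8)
The plan is to follow the now-standard ``geometric reflection'' of the $L^2$-boundedness of a positive kernel: a nonnegative kernel forces a \emph{bounded-from-below Menger-type curvature} quantity to be \emph{finite}, which then, via a Heisenberg analogue of the David--Semmes/L\'eger machinery, yields that $E$ is contained in a $1$-regular curve. Concretely, I would first test the operator $T_2^\ve$ (or rather its bilinear form) on characteristic functions $\1_{B}$ of balls $B\cap E$ and symmetrize in the three variables $p,q,r$. Because $K_2\geq 0$, the symmetrization of the kernel
\[
  \sum_{\sigma\in S_3} K_2(\sigma(q)^{-1}\cdot \sigma(p))\cdot(\text{cross terms})
\]
does not suffer the cancellation that makes the Euclidean estimates delicate; instead, one gets a clean lower bound of the form $c_2(p,q,r)\geq \Phi(p,q,r)$ where $\Phi$ measures how far the triple $\{p,q,r\}$ is from lying on a single horizontal line in $\H$ — the point being that $NH$, hence $\Omega$, vanishes exactly on horizontal lines (this is the same heuristic used for Theorem~\ref{necesthm}, now run in reverse). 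The first key step is therefore to prove the pointwise inequality comparing the symmetrized kernel to a genuine ``curvature density,'' and then to integrate: $L^2$-boundedness of $T_2^\ve$ on $E$ gives $\iiint_{(E\cap B)^3}\Phi(p,q,r)\,d\ha^1 d\ha^1 d\ha^1 \lesssim \ha^1(B\cap E)$ for all balls $B$, uniformly in $\ve$.

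The second block of the argument is to upgrade this integral curvature bound to a \emph{uniform rectifiability} statement and then to containment in a regular curve. For this I would invoke (or reprove in the Heisenberg setting) the L\'eger-type theorem: a $1$-regular measure in a metric space with finite Menger-type curvature relative to the appropriate modulus of non-colinearity is uniformly rectifiable, and a $1$-regular uniformly rectifiable set is contained in a $1$-regular curve (the last implication is the metric-space analogue of the Euclidean ``big pieces of Lipschitz images'' $\Rightarrow$ ``subset of a regular curve,'' which for $1$-dimensional sets is elementary once one has the Jones-type traveling salesman estimate). In practice the cleanest route is to show that the curvature bound forces the $\beta$-numbers of $E$ — the width of the best approximating \emph{horizontal line} at each scale — to satisfy a Carleson packing condition $\sum_{Q}\beta_E(Q)^2\,\ha^1(Q)\lesssim \ha^1(R)$ over dyadic cubes $Q\subset R$, and then apply the Heisenberg traveling salesman theorem of Ferrari--Franchi--Pajot (or Li--Schul) to conclude that $E$ lies on a $1$-regular curve. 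So the chain is: $L^2$-boundedness $\Rightarrow$ curvature bound $\Rightarrow$ $\beta^2$-Carleson condition $\Rightarrow$ contained in a regular curve.

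I expect the main obstacle to be the \emph{first} step, namely establishing the pointwise comparison between the symmetrized kernel $\sum_\sigma K_2$ and a quantity that genuinely controls the $\beta$-numbers. In the Euclidean Cauchy case the Menger curvature identity is exact; here $K_2$ is only comparable, not equal, to such an object, and one must carefully check that $\Omega^2 = NH^2/N^2$ after symmetrization really does detect \emph{all} directions of non-flatness and not merely the vertical displacement — a triple of points could be nearly colinear along a non-horizontal line and still be far from any horizontal line, so one has to rule out (or rather quantify) the degenerate configurations where $NH$ is small but the triple is still ``bad.'' The choice of the exponent $2$ in $K_2=\Omega^2/N$ is presumably dictated precisely by making this comparison work: a smaller power would make $\Omega$ vanish too slowly near horizontal lines (destroying the upper bound needed in Theorem~\ref{necesthm}'s companion direction) and a larger power would make it vanish too fast to see all non-horizontal non-flatness. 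Sorting out this balance, and packaging the resulting inequality so that it feeds cleanly into the L\'eger/traveling-salesman machinery, is where the real work lies; the rest is adaptation of known $1$-dimensional uniform-rectifiability technology to $(\H,d_\H)$, which is by now available in the literature.
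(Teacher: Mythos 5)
There is a genuine gap, and it sits exactly where you flagged it: the ``first key step'' cannot be a pointwise inequality. No symmetrization of the positive kernel $K_2$ dominates a Menger-type curvature (or a horizontal $\beta$-number) pointwise. Concretely, take $p_1=(0,0,0)$, $p_2=(r,0,0)$, $p_3=(0,r,0)$: then $NH(p_1^{-1}p_2)=NH(p_1^{-1}p_3)=0$, so every term of the symmetrized quantity $\sum_{\sigma}NH(p_{\sigma(1)}^{-1}p_{\sigma(2)})^2NH(p_{\sigma(1)}^{-1}p_{\sigma(3)})^2/\diam^6$ vanishes, while the triple is uniformly non-collinear (its metric Menger curvature is $\asymp 1/r$ and its distance to any horizontal line at scale $r$ is $\asymp r$). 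What the $L^2$ bound actually controls, after testing on $\chi_{B(p,R)}$ and Fubini, is the integral of $\gamma_1^2\gamma_2^2/\diam^2$, where $\gamma_1,\gamma_2$ are the largest and second largest of the three normalized quantities $NH/d$; whereas the excess $\partial(p_1,p_2,p_3)$ (hence $c^2$, via Hahlomaa's estimate $c^2\lesssim\partial/\diam^3$) is only bounded by $\gamma_1^4\,\diam$. When $\gamma_2\ll\gamma_1$ these are incomparable, as the example shows. The paper's central work (the $NH^2$ ``triangle inequality'' relating vertical components to the area of the projected triangle, Lemma \ref{l:NH-strip}, and above all the trichotomy Lemma \ref{l:trichotomy}) is precisely the mechanism you are missing: using the $1$-regularity of $E$ one finds, for each bad triple with $\gamma_2\ll\gamma_1$, a set of substitute points of measure $\gtrsim r$ producing comparable triples on which $\gamma_1\lesssim c_2\gamma_2$, and the comparison $\iiint\gamma_1^4/\diam^2\lesssim\iiint\gamma_1^2\gamma_2^2/\diam^2$ is then established only after integration, by a Fubini/substitution argument (Proposition \ref{p:tsp}'s analogue \eqref{e:gamma-bound}), not pointwise. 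Without this step your chain breaks at its first link.

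Your endgame also diverges from the paper's and is riskier. The paper never passes through $\beta$-numbers or a Heisenberg traveling salesman theorem: once the Carleson bound $\iiint_{\Sigma(\alpha)\cap B(p,R)^3}c^2\lesssim R$ is in hand, it invokes Hahlomaa's criterion for $1$-regular metric spaces (curvature integral bounds imply containment in a $1$-regular curve), which works with the purely metric Menger curvature and needs nothing Heisenberg-specific. Your proposed route, curvature bound $\Rightarrow$ $\beta^2$-Carleson packing $\Rightarrow$ Ferrari--Franchi--Pajot/Li--Schul, faces the known exponent gap in the Heisenberg traveling salesman problem: the necessary direction naturally produces fourth powers of $\beta$ (as in Proposition \ref{p:tsp} here), and it is not clear that the kernel bound yields the quadratic packing condition the sufficient direction requires; you would also still need to upgrade ``contained in a rectifiable curve'' to ``contained in a $1$-AD regular curve.'' So even granting the first step, the second block as proposed would need substantial additional argument that the paper's Hahlomaa-based route avoids.
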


Thus, the vertical fluctuations of a Lipschitz curve in $\H$ contain enough information to determine regularity.

The following question arises naturally from Theorems \ref{necesthm} and \ref{suffthm}. \emph{Does there exist some $m \in \N$ such that any $1$-regular set $E$ is contained in some $1$-regular curve if and only if the operators 
$$T^\ve f\,(p)=\int_E\frac{ \Omega(q^{-1}\cdot p)^m}{N(q^{-1}\cdot p)}f(q)\, d\ha^1(q)$$
are uniformly bounded in $L^2(E)$?}
The methods developed in this paper do not allow us to answer this question, partly because our proof for Theorem \ref{necesthm} seems to require a large power for $\Omega(p)$. This is essential because we are using a positive kernel and so are not able to use antisymmetry to gain additional control from the bilinearity as is commonly used in these types of arguments (for example, see section 6.2 of \cite{Tolsa}).  The proof of Theorem \ref{suffthm} uses delicate estimates regarding the Kor\'anyi norm and is also not likely to be improved without a major change in the proof structure.

A motivation for the geometric study of SIOs in $\Rn$ is their significance in PDE and potential theory. In particular the $d$-dimensional Riesz transforms (the SIOs associated to the kernels $x/|x|^{d+1}$) for $d=1$ and $d=n-1$  play a crucial role in the geometric characterization of \emph{removable sets} for bounded analytic functions and Lipschitz harmonic functions. Landmark contributions by David \cite{Da3}, David and  Mattila \cite{DM}, and  Nazarov,  Tolsa and Volberg \cite{NToV1}, \cite{NToV2}, established that these removable sets coincide with the purely $(n-1)$-unrectifiable sets in $\Rn$, i.e. the sets which intersect every $(n-1)$-dimensional Lipschitz graph in a set of vanishing $(n-1)$-dimensional Hausdorff measure.  For an excellent review of the topic and its connections to non-homogeneous harmonic analysis we refer the reader to the survey \cite{EiV} by Eiderman and Volberg.

The same motivation exists in several non-commutative Lie groups as well. For example the problem of characterizing removable sets for Lipschitz harmonic functions has a natural analogue in Carnot groups. In that case the  harmonic functions are, by definition, the solutions to the sub-Laplacian equation. It was shown in \cite{CM} that in the case of the Heisenbgerg group,  the dimension threshold for such removable sets  is $\dim \H - 1 = 3$, where $\dim\H$ denotes the Hausdorff dimension of $\H$. See also \cite{CMT} for an extension of the previous result to all Carnot groups. As in the Euclidean case one has to handle a SIO whose kernel is the horizontal gradient of the fundamental solution of the sub-Laplacian. For example in $\H$, such a kernel can be explicitly written as
$$K(p):=\left( \frac{x (x^2+y^2)+yz}{((x^2+y^2)^2+z^2)^{3/2}}, \frac{y(x^2+y^2)-xz}{((x^2+y^2)^2+z^2)^{3/2}} \right)$$
for $p=(x,y,z) \in \H$. Currently we know very little about the action of this kernel on $3$-dimensional subsets of $\H$, nevertheless it has motivated research on SIOs on lower dimensional subets of $\H$, e.g. \cite{CM1} and the present paper, as well as the very recent study of quantitative rectifiability in $\H$, see \cite{CFO}.

\section{Preliminaries}
\label{sec:prelim}

Although we have already defined a metric on $\H$ we will also need the fact that there exists a natural path metric on $\H$. Notice that the Heisenberg group is a Lie group with respect to the group operation defined in \ref{eq:Heis}, and the Lie algebra of the left invariant vector fields in $\H$ is generated by the following vector fields
$$X:=\partial_x +y\partial_{z}, \  \ Y:=\partial_{y}-x \partial_{z}, \ \ T:=\partial_{z}.$$
The vector fields $X$ and $Y$ define the \emph{horizontal subbundle} $H \H$ of the tangent vector bundle of $\R^{3}$. For every point $p \in \H$ we will denote the horizontal fiber by $H_p \H$. Every such horizontal fiber is endowed with the left invariant scalar product $\langle \cdot,\cdot \rangle_p$ and the corresponding norm $|\cdot|_p$ that make the vector fields $X,Y,T$ orthonormal. 

\begin{df}
\label{horizontal}
An absolutely continuous curve $\gamma:[a,b]\ra \H$ will be called \emph{horizontal}, with respect to the vector fields $X,Y$ if 
$$\dot \gamma (t) \in H_{\gamma(t)}\H\ \ \text{for a.e.}\ t\in[a,b].$$
\end{df}

\begin{df}
\label{cc}
The \emph{Carnot-Carath\'eodory distance} of $p,q \in \H$ is
\begin{equation*}
d_{cc}(p,q)=\inf  \int_a^b |\dot \gamma (t)|_{\gamma(t)}dt
\end{equation*}
where the infimum is taken over all horizontal curves $\gamma:[a,b]\ra \H$
such that $\gamma(a)=p$ and $\gamma(b)=q.$
\end{df}

By Chow's theorem  the above set of curves joining $p$ and $q$ is not empty and hence $d_{cc}$ defines  a metric on $\H$. Furthermore the infimum in the definition can be replaced by a minimum. See \cite{blu} for more details.

\begin{remark} It follows by results of Pansu \cite{Pan1}, \cite{Pan2} that any $1$-AD regular curve is horizontal, hence the reader should keep in mind that our two main Theorems ( Theorems \ref{necesthm} and \ref{suffthm}) essentially involve horizontal curves.
\end{remark}

A point $p \in \H$ is called \emph{horizontal} if $p$ lies on the $xy$-plane. Observe that for horizontal points we can extend the dilations $\delta_r$ to all $\R$. Hence for a horizontal  point $p=(x,y,0)$ and $r \in \R$,
$$\delta_r(x,y,0)=(rx,ry,0).$$ 
We can now define an important family of curves in the Heisenberg group.

\begin{df}
Let $p,q \in \H$ such that $q$ is horizontal. The subsets of the form
$$\{p \cdot \delta_r(q): r \in \R\}$$ 
are called \emph{horizontal lines}.
\end{df}
Observe that horizontal lines  are horizontal curves with constant tangent vector. Note also that the horizontal lines going through a specified point in $\H$ span only two dimensions instead of three as in $\R^3$. This is a significant difference between Heisenberg and Euclidean geometry.

It is easy to see that the homomorphic projection $\pi : \H \to \R^2$ defined by
$$\pi(x,y,z)=(x,y),$$
is $1$-Lipshitz. We will also use the map $\tilde{\pi}:\H \ra \H$ defined by
$$\tilde{\pi}(x,y,z)=(x,y,0).$$ We stress that $\tilde{\pi}$ is \emph{not} a homomorphism.

\begin{df}[Horizontal interpolation]
For $p,q \in \H$,
$$\overline{p,q}=\{p \cdot \delta_r \tilde{\pi}(p^{-1} \cdot q): r \in [0,1]\}.$$
\end{df}

Note that $\overline{p,q}$ is a horizontal segment starting from $p$ traveling to the horizontal direction of $p^{-1}\cdot q$.

\begin{df}
\label{stdrd}Let $(X,d)$ be a metric space. We say that
$$k(\cdot,\cdot):X\times X\setminus\{x=y\}\rightarrow \R$$ is
an $n$-dimensional \emph{Calder\'{o}n-Zygmund (CZ)}-kernel if there exist
constants $c>0$ and $\eta$, with $0<\eta\leq1$, such that  for all $x,y\in X$, $x\neq y$:
\begin{enumerate}
\item $|k(x,y)| \leq \frac{c}{d(x,y)^n}$,
\item $|k(x,y)-k(x',y)|  + |k(y,x) -k(y,x')|\leq c \dfrac{d(x,x')^\eta}{d(x,y)^{n+\eta}}$ if $d(x,x')\leq d(x,y)/2$.
\end{enumerate}
\end{df}
For the next lemma, recall the definition \eqref{omega} of the functions $\Omega$.
\begin{lemma}
\label{stdrkernels}
Fix $m \in \N$, and let $k:\H \times \H\setminus\{x=y\}\rightarrow \R$ defined as
$$k(p,q)=\frac{\Omega(q^{-1}\cdot p)^m}{N(q^{-1}\cdot p)}.$$
Then $k$ is an $1$-dimensional CZ-kernel.
\end{lemma}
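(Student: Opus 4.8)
The goal is to verify the two defining properties of a $1$-dimensional CZ-kernel for $k(p,q) = \Omega(q^{-1}\cdot p)^m / N(q^{-1}\cdot p)$ with respect to the metric $d_\H$. Since $d_\H(p,q) = N(q^{-1}\cdot p)$ and $0 \le \Omega \le 1$, property (1) is immediate: $|k(p,q)| = \Omega(q^{-1}\cdot p)^m / N(q^{-1}\cdot p) \le 1/N(q^{-1}\cdot p) = d_\H(p,q)^{-1}$, so we may take $c = 1$ and $n = 1$ there. The content of the lemma is the Hölder-type estimate (2), for which I would prove the standard reduction: it suffices to show that $\phi := \Omega^m / N$ is a $(-1)$-homogeneous function on $\H\setminus\{0\}$ that is locally Lipschitz away from the origin, i.e. there is a constant $C$ with $|\phi(p) - \phi(p')| \le C\, d_\H(p,p')/N(p)^2$ whenever $d_\H(p,p') \le N(p)/2$; this is the usual ``$C^1$ minus homogeneity degree one'' criterion, and it yields (2) with $\eta = 1$ after using left-invariance of $d_\H$ and the triangle inequality to handle both $q^{-1}\cdot p$ and the reversed argument. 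The key point to note is that because the metric is left-invariant, writing $u = q^{-1}\cdot p$, $u' = q^{-1}\cdot p'$, we have $d_\H(u,u') = d_\H(p,p')$ and $N(u) = d_\H(p,q)$, so the Heisenberg translation does not cause trouble; similarly for the second term one works with $p^{-1}\cdot q$.

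The analytic core is therefore a gradient estimate for $\phi = \Omega^m/N = NH^m / N^{m+1}$ on $\H \setminus \{0\}$. I would compute the Euclidean gradient of $\phi$ directly: with $N(x,y,z)^4 = (x^2+y^2)^2 + z^2$ and $NH(x,y,z)^2 = |z|$, one gets $\phi = |z|^{m/2} / ((x^2+y^2)^2+z^2)^{(m+1)/4}$. Differentiating, $\partial_x \phi$ and $\partial_y \phi$ pick up a factor $|z|^{m/2}(x^2+y^2)\cdot(\text{stuff})/N^{m+5}$ and $\partial_z\phi$ is controlled by $|z|^{m/2-1}/N^{m+1} + |z|^{m/2}\cdot|z|/N^{m+5}$. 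The homogeneity $\phi(\delta_r p) = r^{-1}\phi(p)$ together with $\partial_z$ scaling like $r^{-2}$ and $\partial_x,\partial_y$ like $r^{-1}$ forces, after a scaling reduction to $N(p) = 1$, the bounds $|\partial_x\phi|, |\partial_y\phi| \lesssim 1$ and $|\partial_z \phi| \lesssim 1$ on the unit Korányi sphere away from where the estimate could blow up — which is exactly the locus $z = 0$. This is the main obstacle: near the horizontal plane $\{z=0\}$ the factor $|z|^{m/2-1}$ in $\partial_z\phi$ is singular when $m = 1$, and for $m \ge 2$ it is bounded but $|z|^{1/2}$-type Hölder behaviour rather than Lipschitz behaviour of $\phi$ itself in the $z$-direction must be tracked. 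The clean way around this is to \emph{not} differentiate in the bad direction: instead, estimate $|\phi(p) - \phi(p')|$ by interpolating first in the horizontal variables (where $\phi$ is genuinely smooth) and then noting that $|z|^{m/2}$ is $\tfrac m2$-Hölder hence, for $m \ge 2$, Lipschitz on bounded sets, while the denominator $N^{m+1}$ is comparable to a constant on the region $N(p) \approx 1$, $d_\H(p,p') \le 1/2$. Since the CZ-kernel definition only asks for \emph{some} exponent $\eta \in (0,1]$, even the crude bound $|\,|z|^{m/2} - |z'|^{m/2}| \le |z - z'|^{\min(1, m/2)} \cdot C$ (on a bounded region) is enough, and $|z - z'| \lesssim d_\H(p,p')^2 \vee N(p) d_\H(p,p')$, which after the homogeneity rescaling gives property (2) with $\eta = \min(1, m/2) \ge 1/2$; one then simply records that $\eta$ can always be taken $\le 1$.

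Concretely, the steps in order: (i) reduce (2) to a one-variable estimate via left-invariance and the triangle inequality, replacing $p' $ by $u' = q^{-1}\cdot p'$ and comparing $\phi(u)$ with $\phi(u')$ where $N(u) = d_\H(p,q)$ and $d_\H(u,u') = d_\H(p,p') \le N(u)/2$; (ii) by $(-1)$-homogeneity of $\phi$ under $\delta_r$, rescale so that $N(u) = 1$, reducing to: $|\phi(u) - \phi(u')| \le C\, d_\H(u,u')^\eta$ for $N(u) = 1$, $d_\H(u,u') \le 1/2$; (iii) on this compact region, estimate separately the horizontal displacement (where $\phi$ is $C^1$, giving a Lipschitz bound) and the vertical displacement (where the only non-smoothness is $|z|^{m/2}$, handled by the elementary Hölder inequality for $t \mapsto |t|^{m/2}$ together with $\min(1, m/2) \ge 1/2$), and combine using $|\pi(u) - \pi(u')| \lesssim d_\H(u,u')$ and $|z_u - z_{u'}| \lesssim d_\H(u,u')^2 + d_\H(u,u') \lesssim d_\H(u,u')$ on the bounded region; (iv) undo the rescaling to recover the factor $N(u)^{-1-\eta} = d_\H(p,q)^{-1-\eta}$ and conclude (2). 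I expect step (iii), specifically controlling $\phi$ across the horizontal plane $\{z = 0\}$ uniformly, to be the only genuinely delicate point; everything else is bookkeeping with the explicit formula for $N$ and the left-invariance of $d_\H$.
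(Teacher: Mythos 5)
Your argument is correct, but it takes a genuinely different route from the paper's. The paper disposes of condition (2) in one stroke: it notes that $f(p)=\Omega(p)^m/N(p)$ is $-1$-homogeneous and $C^1$ away from the origin and invokes Proposition 1.7 of Folland--Stein to get $|f(P\cdot Q)-f(P)|\le C\,N(Q)/N(P)^2$ whenever $N(Q)\le N(P)/2$, which combined with left-invariance (your step (i)) and the symmetry of $k$ gives (2) with $\eta=1$ for every $m$. You instead perform the dilation reduction by hand and estimate numerator and denominator separately on the unit Kor\'anyi sphere, worrying about the non-smoothness of $|z|^{m/2}$ across $\{z=0\}$ and settling for $\eta=\min(1,m/2)$. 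Your caution is in fact warranted: for $m=1$ the function $NH/N^{2}$ is neither $C^1$ nor Lipschitz with respect to $d_\H$ near points of the plane $\{z=0\}$ --- taking $u=(1,0,0)$ and $u'=u\cdot(0,\ve,0)=(1,\ve,\ve/2)$ gives $|f(u')-f(u)|\asymp\ve^{1/2}$ while $d_\H(u,u')=\ve$ --- so the paper's blanket appeal to $C^1$ regularity is literally accurate only for $m\ge 3$ (for $m=2$ the function is Lipschitz, which still yields $\eta=1$ by an easy variant), whereas your exponent $\eta=\min(1,m/2)$ is exactly what holds and is all that the CZ definition demands. The trade-off: the paper's citation is shorter and delivers the clean $\eta=1$ for the kernels it actually uses ($m=8$ and $m=2$), while your computation is self-contained and correctly covers the borderline cases. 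Two details to pin down in your step (iii): the intermediate coordinate points used when you vary $(x,y)$ first and then $z$ stay uniformly away from the Kor\'anyi origin, so the denominators are genuinely comparable to $1$; and the denominator increment is controlled via $|N(u)-N(u')|\le d_\H(u,u')$, which follows from the triangle inequality for $N$. Both are routine.
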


\begin{proof} We need to verify (1) and (2) from Definition \ref{stdrd}. Notice that (1) is immediate because by the definition of the Koran\'yi norm $NH(p) \leq N(p)$ for all $p \in \H$. For (2) we will use the fact that the function
$$f(p)=\frac{\Omega(p)^m}{N(p)}, \quad p \in \H \stm \{0\}$$
is $C^1$ away from the origin and it is also $-1$-homogeneous, that is
$$f(\delta_r(p))=\frac{1}{r} f(p)$$
for all $r>0$ and $p \in \H \stm \{0\}$. Hence by \cite[Proposition 1.7]{FolS}  there exists some constant $C>0$ such that for all $P,Q \in \H$ with $N(Q) \leq N(P)/2$
$$|f(P \cdot Q)-f(P)|\leq C \, \frac{N(Q)}{N(P)^2}.$$
Hence if $p,p',q \in \H$ such that $d_\H(p,p')\leq d_\H(p,q)/2$, 
\begin{equation}
\label{kerest}
\begin{split}
|k(p,q)-k(p',q)|&=|f(q^{-1} \cdot p)-f(q^{-1} \cdot p')|\\
&=|f(q^{-1} \cdot p)-f(q^{-1} \cdot p \cdot p^{-1} \cdot p')| \\
&\leq C \frac{N( {p'}^{-1} \cdot p)}{N(q^{-1} \cdot p)^2}=C\frac{d_\H(p',p)}{d_\H(p,q)^2}.
\end{split}
\end{equation}
Since $k$ is symmetric, from \eqref{kerest} we deduce that $k$ also satisfies (2) of Definition \ref{stdrd}. Hence the proof of the lemma is complete.
\end{proof}

In the sequel, we will use the notation $a \lesssim b$ or $a \gtrsim b$ to mean that there exists a universal constant $C$ so that $a \leq Cb$ or $a \geq Cb$.  This universal constant can change from instance to instance.  We let $a \asymp b$ mean both $a \lesssim b$ and $b \lesssim a$.  Given another fixed quantity $\alpha$, we let $a \lesssim_\alpha b$ and $b \lesssim_\alpha a$ mean that the quantity $C$ can depend only on $\alpha$.



\section{Necessary conditions}
In order to simplify notation, in the two remaining sections we will denote $d:=d_\H$,  $B(p,r):=B_\H(p,r)$ and $ab:=a \cdot b$ for $a,b \in \H$. 

Let $E \subset \H$ such that $\mu = \Hd^1|_E$ satisfy the 1-regularity condition
\begin{align*}
  \xi r \leq \mu(B(x,r)) \leq \xi^{-1} r, \qquad \forall x \in E, r > 0,
\end{align*}
for some $\xi < 1$.  We now recall the construction of \emph{David cubes}, which were  introduced by David in \cite{david-wavelets}.  David cubes can be constructed on any regular set of a geometrically doubling metric space. In particular for the set $E$, we obtain a constant $c >
0$ and a family of partitions $\Delta_{j}$ of $E$, $j \in \Z$, with the following properties;
\begin{itemize}
\item[(D1)] If $k \leq j$, $Q \in \Delta_{j}$ and $Q' \in
\Delta_{k}$, then either $Q \cap Q' = \emptyset$, or $Q \subset
Q'$. 
\item[(D2)] If $Q \in \Delta_{j}$, then $\diam Q \leq 2^{-j}$.
\item[(D3)] Every set $Q \in \Delta_{j}$ contains a set of the form
$B(p_{Q},c2^{-j}) \cap E$ for some $p_{Q} \in Q$.
\end{itemize}
The sets in $\Delta := \cup \Delta_{j}$ are called David cubes, or
\emph{dyadic cubes}, of $E$. Notice that $\diam(Q) \asymp 2^{-j}$ if $Q \in \Delta_j$. 
 For a cube $S \in \Delta$, we define
\begin{align*}
  \Delta(S) := \{Q \in \Delta : Q \subseteq S\}.
\end{align*}
Given a cube $Q \in \Delta$ and $\lambda \geq 1$, we define
\begin{align*}
  \lambda Q := \{x \in E : d(x,Q) \leq (\lambda - 1) \diam(Q)\}.
\end{align*}
It follows from (D1), (D2), and the $1$-regularity of $E$ that
$\mu(Q) \sim 2^{-j}$ for $Q \in \Delta_{j}$.

Define the positive symmetric -1-homogeneous kernel $K(p) = \Omega^8(p)/N(p) = \frac{NH(p)^8}{N(p)^9}$ .  For any $\ve > 0$, we can define the truncated operator as before
\begin{align*}
  T_1^\ve f(x) = \int_{d(y,x) > \ve} K(y^{-1}x) f(x) ~d\mu(y).
\end{align*}

\begin{proof}[Proof of Theorem \ref{necesthm}]
Our goal is to show that when $E$ lies on a rectifiable curve, there exists a uniform bound $C < \infty$ that can depend on $\xi$ so that
\begin{align}
  \|T_1^\ve \chi_S\|_{L^2(S)}^2 \leq C \mu(S), \qquad \forall S \in \Delta, \forall \ve > 0. \label{e:T_eps-bound}
\end{align}
We then apply the $T(1)$-theorem for homogeneous spaces, see e.g. \cite{denghan} and \cite{david-wavelets}, to deduce uniform $L^2$-boundedness of $T_1^\ve$ for all $\ve > 0$.  Note we may suppose $E$ is a 1-regular rectifiable curve as taking a subset can only decrease the $L^2$-bound of $T_1^\ve \chi_S$.

From now on we assume the 1-regular set $E$ actually lies on a rectifiable curve.
For $x \in E$ and $r > 0$, we define
\begin{align*}
  \beta_E(x,r) = \inf_L \sup_{z \in E \cap B(x,r)} \frac{d(z,L)}{r}.
\end{align*}
\begin{proposition} \label{p:tsp}
  There exists a constant $C \geq 1$ depending only $\xi$ so that for any $S \in \Delta$, we have
  \begin{align}
    \sum_{Q \in \Delta(S)} \beta(10Q)^4 \mu(Q) \leq C \mu(S). \label{e:tsp}
  \end{align}
\end{proposition}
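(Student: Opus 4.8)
Inequality \eqref{e:tsp} is a Heisenberg incarnation of the ``geometric lemma'' underlying the analyst's traveling salesman theorem, and the plan is to establish it in that guise; the one genuinely new feature is that the correct exponent here is $4$, not the Euclidean $2$. By the reduction already made we may assume $E=\Gamma$ is itself a $1$-regular rectifiable curve, so $\mu(Q)\asymp\diam Q$ for $Q\in\Delta$ and it suffices to prove $\sum_{Q\in\Delta(S)}\beta(10Q)^4\diam Q\lesssim_\xi\diam S$. For $Q\in\Delta(S)$ the quantity $\beta(10Q)$ is comparable to $\beta_\Gamma(x,r)$ for some $x\in Q$ and $r\asymp\diam Q$, and $B(x,r)\subseteq B(p_S,\Lambda\diam S)$ for a dimensional $\Lambda$; since upper $1$-regularity gives $\mathcal{H}^1(\Gamma\cap B(p_S,\Lambda'\diam S))\lesssim_\xi\diam S\asymp\mu(S)$ for a slightly larger dimensional $\Lambda'$, it is enough to prove the localized estimate $\sum_{Q\in\Delta(S)}\beta(10Q)^4\diam Q\lesssim\mathcal{H}^1(\Gamma\cap B(p_S,\Lambda'\diam S))$. (Alternatively one can invoke the traveling salesman estimates of Li and Schul for curves in the Heisenberg group, which are precisely of this quartic type.)

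The sub-Riemannian heart of the matter is a length-increment estimate carrying a \emph{quartic} gain: if $\gamma$ is a horizontal curve, $x\in\Gamma$, $r>0$, $\beta:=\beta_\Gamma(x,r)$, and $\Gamma$ meets $B(x,\tfrac12 r)$, then the length of $\gamma$ inside $B(x,2r)$ exceeds what it would be if $\Gamma$ coincided with a horizontal line there by at least $c\beta^4 r$. To prove it, let $\beta' r$ be the planar flatness of the projection $\pi\gamma$ in $B(x,r)$, i.e.\ the least distance from $\pi\gamma$ to a line of $\R^2$. If $\beta'\gtrsim\beta^2$, the classical planar estimate produces a length gain $\gtrsim(\beta')^2 r\gtrsim\beta^4 r$ for $\pi\gamma$, and $\mathcal{H}^1(\gamma)\ge\mathcal{H}^1(\pi\gamma)$ because $\pi$ is $1$-Lipschitz. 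If instead $\beta'\ll\beta^2$, then either $\pi\gamma$ is very long inside $B(x,2r)$ — in which case the length increment is already $\gtrsim r\ge\beta^4 r$ — or $\pi\gamma$ has length $\lesssim r$ and hugs a line $\ell$ to within $\beta' r$; in the latter case, using the standard fact that along a horizontal curve the increment of the vertical coordinate equals (up to a universal constant) the signed planar area swept by its projection, a thin-tube estimate shows the vertical coordinate of $\gamma$ stays within $\lesssim\beta' r\cdot r\ll\beta^2 r^2$ of a suitable horizontal line $L$ over $\ell$, so that $\gamma$ lies within Kor\'anyi distance $o(\beta r)$ of $L$, contradicting $\beta_\Gamma(x,r)=\beta$. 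Hence only the first alternative survives and the quartic gain holds. This is exactly why the exponent is $4$: a vertical deviation of Kor\'anyi size $\beta r$ corresponds to a vertical displacement of order $\beta^2 r^2$, which can be realized by a planar projection that is only $\sim\beta^2 r$ from flat, and flatness $\beta^2 r$ buys only a length increment of order $(\beta^2)^2 r=\beta^4 r$; in $\R^n$ there is no such downgrade.

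With the quartic length-increment estimate in hand, the localized estimate follows from the standard Carleson-packing/telescoping machinery of the Euclidean geometric lemma (Jones, David--Semmes), fed with the quartic rather than the quadratic gain and run locally inside $B(p_S,\Lambda'\diam S)$: one organizes the cubes $Q$ with $\beta(10Q)$ bounded below into stopping-time families along $\Gamma$ so that the localized excess lengths $c\,\beta(10Q)^4\diam Q$ are essentially pairwise disjoint, each portion of $\Gamma$ being charged $O(1)$ times thanks to $1$-regularity and (D1)--(D3); summing the excesses gives $\lesssim\mathcal{H}^1(\Gamma\cap B(p_S,\Lambda'\diam S))\lesssim\mu(S)$.

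The main obstacle is the quartic length-increment estimate, and within it the ``vertical'' case: one must quantify that a horizontal curve that is $d$-far from every horizontal line forces its planar projection to be correspondingly far from flat, so that the quartic — not a worse — power is produced. This is also the minimal strength needed downstream, since the kernel $K_1$ in Theorem \ref{necesthm} is positive and offers no antisymmetry to compensate; the packing step, by contrast, is structurally identical to the Euclidean one.
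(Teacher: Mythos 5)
Your parenthetical fallback is exactly the paper's proof: the authors simply quote Theorem I of \cite{li-schul-1}, which gives the quartic Carleson-type bound $\int_\H\int_0^\infty \beta_E(B(x,t))^4\,\tfrac{dt}{t^4}\,d\Hd^4(x)\lesssim \Hd^1(E)$ for horizontal curves, and then use $1$-regularity to pass from this integral to the dyadic sum over $\Delta(S)$ and to bound $\Hd^1(E\cap S)\lesssim_\xi\mu(S)$. If you invoke Li--Schul, your argument and the paper's coincide (your reduction to $E=\Gamma$ a $1$-regular curve and the localization to a fixed dilate of $S$ are the same routine steps the paper performs).

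Your primary, self-contained route, however, has a genuine gap at the step you call ``standard Carleson-packing/telescoping machinery.'' A local excess lemma of the form $\mathrm{length~excess}\gtrsim\beta^4 r$ in $B(x,2r)$ does not sum over $\Delta(S)$ by any disjointness or $O(1)$-overlap argument: every portion of $\Gamma$ lies in one cube of \emph{every} generation, so it is charged at infinitely many scales, and the $\beta$'s need not decay along scales at a fixed location (corner- or spiral-like configurations keep $\beta(10Q)$ bounded below across many generations containing the same arc). Converting local excesses into a summable Carleson packing is precisely the hard direction of the traveling salesman upper bound: even in $\Rn$ this is the nontrivial Jones--Okikiolu charging argument to pairs of net points, and in $\H$ it is exactly the content of \cite{li-schul-1} (a full-length proof, whose exponent $4$ is shown sharp in \cite{li-schul-2}), where the delicate point is the interaction between the quadratic horizontal and quartic vertical contributions across scales. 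So asserting that the excesses are ``essentially pairwise disjoint'' and that the packing step is ``structurally identical to the Euclidean one'' conceals, rather than supplies, the main argument. Your local dichotomy (either the planar flatness $\beta'\gtrsim\beta^2$, hence planar excess $\gtrsim\beta^4 r$, or the thin-tube area bound forces all points of $\gamma\cap B(x,r)$ within Kor\'anyi distance $\lesssim\sqrt{\beta'}\,r\ll\beta r$ of a single horizontal line, a contradiction) is the right mechanism and is plausibly repairable, but even it needs care as stated: the projection may enter and exit the ball many times, the ``baseline'' must be taken as entry--exit chords rather than a full diameter for excesses to have any chance of telescoping, and one must verify that a single horizontal line $L$ (whose vertical coordinate is affine in the parameter) controls planar and vertical deviations simultaneously. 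In short, either cite Li--Schul as the paper does, or be prepared to essentially reprove their theorem; the sketch as written does not.
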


\begin{proof}
    This essentially follows from Theorem I of \cite{li-schul-1} says that there exists some universal constant $C > 0$ so that
    \begin{align*}
        \int_\H \int_0^\infty \beta_E(B(x,t))^4 ~\frac{dt}{t^4} ~d\Hd^4(x) \leq C \Hd^1(E)
    \end{align*}
    when $E$ is simply a horizontal curve.  When $E$ is in addition 1-regular, it is a standard argument to use the Ahlfors regularity to lower bound this integral by a constant multiple---which can depend on $\xi$---of the left hand side of \eqref{e:tsp} (after intersecting $E$ with $S$).  In fact, one can easily show that the integral and sum are comparable up to multiplicative constants.

    One then gets
    \begin{align*}
        \sum_{Q \in \Delta(S)} \beta(10Q)^4 \mu(Q) \leq C \Hd^1(E \cap S) \lesssim_\xi \mu(S),
    \end{align*}
    where we again used 1-regularity of $E$ in the final inequality.
\end{proof}

We now fix $S \in \Delta$ a cube.  

Now define a positive even Lipschitz function $\psi : \R \to \R$ so that $\chi_{B(0,1/2)} \leq \psi \leq \chi_{B(0,2)}$.  We define
\begin{align*}
  \psi_j : \H &\to \R \\
  z &\mapsto \psi(2^j N(z)).
\end{align*}
We define $\phi_j := \psi_j - \psi_{j+1}$.  Thus, $\phi_j$ is supported on the annulus $B(0,2^{1-j}) \backslash B(0,2^{-2-j})$ in $\H$ and we have that
\begin{align}
  \chi_{\H \backslash B(0,2^{-n+1})} \leq \sum_{n \leq N} \phi_n \leq \chi_{\H \backslash B(0,2^{-n-2})}. \label{e:partial-phi-sum}
\end{align}
For each $j \in \Z$, we can define $K_{(j)} = \phi_j \cdot K$ and also
\begin{align*}
  T_{(j)}\chi_S(x) = \int_S K_{(j)}(y^{-1}x) ~d\mu(y).
\end{align*}
Define $S_N = \sum_{n \leq N} T_{(n)}$.  As the kernel $K$ is positive, we can easily get the following {\it pointwise} estimates for any positive function $f$ from \eqref{e:partial-phi-sum}
\begin{align*}
  0 \leq T_1^\ve f \leq S_{n+1} f, \qquad \forall \ve \geq 2^{-n}.
\end{align*}
Thus, to show uniform bound \eqref{e:T_eps-bound}, it suffices to show that there exists bound $C < \infty$ depending possibly on $\xi$ so that
\begin{align*}
  \|S_n \chi_S\|_{L^2(S)}^2 \leq C \mu(S), \qquad \forall S \in \Delta, \forall n \in \Z.
\end{align*}
We now fix $S \in \Delta_\ell$.

We will need the following lemma of \cite{li-schul-2}.

\begin{lemma}[Lemma 3.3 of \cite{li-schul-2}] \label{l:2pt-NH}
  For every $a,b \in \H$ and horizontal line $L \subset \H$, we have
  \begin{align}
    \max\{d(a,L),d(b,L)\} \geq \frac{1}{16} \frac{NH(a^{-1}b)^2}{d(a,b)}. \label{e:2pt-NH}
  \end{align}
\end{lemma}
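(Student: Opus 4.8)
The plan is to separate two regimes according to the relative size of $M := \max\{d(a,L),d(b,L)\}$ and $D := d(a,b)$, using the elementary pointwise bound $NH \le N$ to dispatch one of them for free. Indeed, for every $p \in \H$ one has $NH(p)^2 = |z(p)| \le (|\pi(p)|^4 + z(p)^2)^{1/2} = N(p)^2$, where $\pi : \H \to \R^2$ is the projection onto the first two coordinates; hence $NH(a^{-1}b) \le N(a^{-1}b) = d(a,b) = D$. Consequently, if $M \ge D$ then $NH(a^{-1}b)^2 \le D^2 \le MD \le 16 MD$ and \eqref{e:2pt-NH} is immediate. So the content lies in the case $M < D$, where both $a$ and $b$ lie within distance $D$ of $L$.

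In that case I would choose $a^\ast, b^\ast \in L$ realizing the distances $d(a,L)$ and $d(b,L)$ (they exist since $L$ is closed and closed balls in $\H$ are compact), and decompose
\begin{align*}
  a^{-1}b = u\,v\,w, \qquad u := a^{-1}a^\ast,\quad v := (a^\ast)^{-1}b^\ast,\quad w := (b^\ast)^{-1}b,
\end{align*}
so that, by left invariance of $d$, $N(u) = d(a,L) \le M$ and $N(w) = d(b,L) \le M$. The point that makes the decomposition useful is that $v$ is a \emph{horizontal} element of $\H$: if $L = \{p \cdot \delta_r(q) : r \in \R\}$ with $q = (q_1,q_2,0)$, then a direct computation with \eqref{eq:Heis} gives $(a^\ast)^{-1}b^\ast = ((r_b - r_a)q_1,\ (r_b-r_a)q_2,\ 0)$, whose vertical coordinate vanishes. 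Thus $z(v) = 0$, while $N(v) = d(a^\ast,b^\ast) \le d(a^\ast,a) + d(a,b) + d(b,b^\ast) \le 2M + D$.

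Next I would expand the vertical coordinate of the triple product. From the group law \eqref{eq:Heis}, $z(g_1 g_2) = z(g_1) + z(g_2) + \tfrac12 \omega(\pi g_1, \pi g_2)$ where $\omega(\xi,\eta) := \xi_1\eta_2 - \xi_2\eta_1$ is the standard area form on $\R^2$ (so $|\omega(\xi,\eta)| \le |\xi||\eta|$), and iterating,
\begin{align*}
  z(uvw) = z(u) + z(v) + z(w) + \tfrac12\big(\omega(\pi u,\pi v) + \omega(\pi u,\pi w) + \omega(\pi v,\pi w)\big).
\end{align*}
Setting $z(v) = 0$ and inserting the bounds $|z(g)| = NH(g)^2 \le N(g)^2$, $|\pi(g)| \le N(g)$, $N(u), N(w) \le M$, and $N(v) \le 2M + D$, one obtains
\begin{align*}
  NH(a^{-1}b)^2 = |z(uvw)| \le 2M^2 + \tfrac12\big(M(2M+D) + M^2 + M(2M+D)\big) = \tfrac92 M^2 + MD,
\end{align*}
and since $M < D$ this is $\le \tfrac{11}{2} MD \le 16 MD$, which is \eqref{e:2pt-NH}.

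The one genuinely delicate point is recognizing the dichotomy. One cannot prove the estimate uniformly by this method: when $a$ and $b$ are far from $L$ the middle factor $v$ is long and the crude bound above only gives something of size $M^2$. But in precisely that regime the trivial inequality $NH(a^{-1}b) \le N(a^{-1}b) = D$ already suffices, while in the complementary regime $M < D$ the single long factor $v$ is harmless because it is horizontal: it enters $z(uvw)$ only through the two cross terms $\omega(\pi u,\pi v)$ and $\omega(\pi v,\pi w)$, each controlled once $N(v) \lesssim D$. Everything else is a routine computation, so I do not anticipate further obstacles.
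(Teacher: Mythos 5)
Your argument is correct. Note that the paper does not prove this statement at all: it imports it verbatim as Lemma 3.3 of \cite{li-schul-2}, so there is no internal proof to compare against, and what you have written is a valid self-contained derivation. The two ingredients are exactly the right ones: the trivial bound $NH\le N$ handles the regime $\max\{d(a,L),d(b,L)\}\ge d(a,b)$, and in the complementary regime the factorization $a^{-1}b=(a^{-1}a^\ast)\,((a^\ast)^{-1}b^\ast)\,((b^\ast)^{-1}b)$ with the middle factor horizontal (your computation that $z((a^\ast)^{-1}b^\ast)=0$ is correct, since both points lie on the same horizontal line) reduces everything to the Baker--Campbell--Hausdorff-type expansion of the vertical coordinate of a triple product, where the long horizontal factor only enters through the two cross terms $\omega(\pi u,\pi v)$ and $\omega(\pi v,\pi w)$, each of size $\lesssim M(2M+D)$. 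I checked the arithmetic: $|z(uvw)|\le \tfrac92 M^2+MD\le \tfrac{11}{2}MD$ when $M<D$, which even yields the better constant $\tfrac{2}{11}$ in place of $\tfrac1{16}$ in \eqref{e:2pt-NH}; the original statement's constant is of course not claimed to be sharp. Two cosmetic points, neither a gap: the inequality is vacuous (or should be read with the convention $0/0=0$) when $a=b$, and the existence of nearest points $a^\ast,b^\ast$ on $L$, which you justify by properness of $(\H,d_\H)$, could alternatively be bypassed by taking almost-minimizers and letting the error tend to $0$.
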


\begin{lemma} \label{l:Tj-beta}
  For any $j \in \Z$ and $x \in E$, we have
  \begin{align}
    T_{(j)}1(x) \lesssim_\xi \beta_E(x,2^{1-j})^4. \label{e:Tj-beta}
  \end{align}
\end{lemma}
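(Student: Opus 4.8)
The plan is to bound $T_{(j)}1(x)$ by integrating the kernel $K_{(j)}(y^{-1}x)$ over the annular shell $A_j(x) := E \cap (B(x,2^{1-j}) \setminus B(x,2^{-2-j}))$, where $\phi_j$ is supported, and to extract a gain of $\beta_E(x,2^{1-j})^4$ from the non-horizontal part of the Kor\'anyi norm. Write $\beta := \beta_E(x,2^{1-j})$ and let $L$ be a horizontal line achieving (within a factor $2$) the infimum in the definition of $\beta$ over $B(x,2^{1-j})$; since $x$ and every $y \in A_j(x)$ lie in $E \cap B(x,2^{1-j})$, we have $d(x,L) \lesssim \beta 2^{-j}$ and $d(y,L) \lesssim \beta 2^{-j}$. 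First I would invoke Lemma \ref{l:2pt-NH} applied to the pair $a = x$, $b = y$ and the line $L$: it gives
\begin{align*}
  NH(y^{-1}x)^2 \leq 16\, d(x,y) \max\{d(x,L),d(y,L)\} \lesssim 2^{-j}\, \beta\, 2^{-j} = \beta\, 2^{-2j},
\end{align*}
using that $d(x,y) \asymp 2^{-j}$ on the shell. Hence $NH(y^{-1}x) \lesssim \beta^{1/2} 2^{-j}$, and since $K(p) = NH(p)^8/N(p)^9$ with $N(y^{-1}x) \asymp 2^{-j}$ on the shell, we obtain the pointwise kernel bound $K_{(j)}(y^{-1}x) \lesssim \beta^4 2^{-8j} / 2^{-9j} = \beta^4 2^{j}$ for $y \in A_j(x)$.

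Second, I would integrate this bound against $\mu$ over the shell. By $1$-regularity, $\mu(B(x,2^{1-j})) \leq \xi^{-1} 2^{1-j} \lesssim_\xi 2^{-j}$, so
\begin{align*}
  T_{(j)}1(x) = \int_{A_j(x)} K_{(j)}(y^{-1}x)\, d\mu(y) \lesssim \beta^4 2^{j} \cdot \mu(A_j(x)) \lesssim_\xi \beta^4 2^{j} \cdot 2^{-j} = \beta^4,
\end{align*}
which is exactly \eqref{e:Tj-beta} once one notes $\beta = \beta_E(x,2^{1-j})$. One mild bookkeeping point: the line $L$ achieving the near-infimum depends on the ball $B(x,2^{1-j})$, but that is precisely the ball over which $\beta_E(x,2^{1-j})$ is defined and over which all relevant points $y$ range, so no monotonicity or doubling juggling between scales is needed here.

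The step I expect to be the main (though still modest) obstacle is making the use of Lemma \ref{l:2pt-NH} fully rigorous across all $y$ in the shell simultaneously: the lemma is a genuine two-point estimate, so for each $y$ one needs $\max\{d(x,L), d(y,L)\}$ controlled, and the subtlety is that $L$ must be a single line that is $\beta$-close to all of $E \cap B(x,2^{1-j})$, which is exactly what the $\inf_L \sup_{z}$ structure of $\beta_E$ provides — one should take $L$ so that $\sup_{z \in E \cap B(x,2^{1-j})} d(z,L) \leq 2\beta_E(x,2^{1-j}) \cdot 2^{1-j}$ and then both $d(x,L)$ and $d(y,L)$ are bounded by this quantity. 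A secondary point is handling the degenerate case $\beta = 0$ (then $E \cap B(x,2^{1-j})$ lies on a horizontal line $L$, forcing $NH(y^{-1}x) = 0$ for all such $y$ by Lemma \ref{l:2pt-NH}, hence $T_{(j)}1(x) = 0$) and the trivial case where the shell is empty. Everything else is routine annular integration using the Ahlfors upper regularity bound, and the powers match: the fourth power of $\beta$ in \eqref{e:Tj-beta} is dictated by the eighth power of $\Omega$ (equivalently $NH^8$) in $K$ together with the squaring $NH \mapsto NH^2$ coming from Lemma \ref{l:2pt-NH}.
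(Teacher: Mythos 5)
Your proof is correct and takes essentially the same route as the paper: restrict to the annulus where $\phi_j$ is supported, use $1$-regularity to reduce to a pointwise bound on $NH(y^{-1}x)^8/d(x,y)^8$, and extract the factor $\beta_E(x,2^{1-j})^4$ from Lemma \ref{l:2pt-NH}. The only cosmetic difference is that you apply \eqref{e:2pt-NH} with a single near-optimal line for $\beta_E(x,2^{1-j})$ uniformly in $y$, while the paper applies it pairwise and then uses the monotonicity $\beta_{\{x,y\}} \leq \beta_E$; the estimate and the power counting are identical.
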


\begin{proof}
  Define the annulus $A = E \cap A(x,2^{-2-j},2^{1-j})$.  Then
  \begin{multline*}
    T_{(j)}1(x) \leq \int_E \phi_j(y^{-1}x) K(y^{-1}x) ~d\mu(y) \leq 2^{j+2} \int_A \frac{NH(y^{-1}x)^8}{N(y^{-1}x)^8} ~d\mu(y) \\
    \lesssim_\xi \sup_{y \in A} \frac{NH(y^{-1}x)^8}{d(x,y)^8}.
  \end{multline*}
  It suffices to show $\frac{NH(y^{-1}x)^8}{d(x,y)^8} \leq 8^4 \beta_E(B(x,2^{1-j}))^4$ when $y \in A$.  This follows easily from \eqref{e:2pt-NH}.  Indeed, as $y \in A$, we have that $d(x,y) \geq 2^{-j-2}$.  We can then find a horizontal line so that
  \begin{align*}
    \beta_{\{x,y\}}(B(x,2^{1-j})) = \frac{\max\{d(x,L),d(y,L)\}}{2^{1-j}} \geq \frac{\max\{d(x,L),d(y,L)\}}{8d(x,y)} \overset{\eqref{e:2pt-NH}}{\geq} \frac{NH(x^{-1}y)^2}{128d(x,y)^2}.
  \end{align*}
  The statement now follows as $\beta_E(B(x,2^{1-j})) \geq \beta_{\{x,y\}}(B(x,2^{1-j}))$.
\end{proof}

We now have the following easy corollary.
\begin{corollary}
  Let $R \in \Delta_j$.  Then for any $\alpha > 0$, we have
  \begin{align}
    \int_R T_{(j)}1(x)^\alpha ~d\mu(x) \lesssim_\xi \beta_E(10R)^{4\alpha} \mu(R). \label{e:Tj-int-beta}
  \end{align}
\end{corollary}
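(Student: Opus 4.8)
The plan is to read the estimate off directly from Lemma~\ref{l:Tj-beta}, the only additional ingredient being the elementary monotonicity of the Jones $\beta$-numbers under enlarging the ball by a bounded factor: if $B(x,r) \subseteq B(x',r')$, then testing the line that is optimal for the pair $(x',r')$ gives
\begin{align*}
  \beta_E(x,r) \;=\; \inf_L \sup_{z \in E \cap B(x,r)} \frac{d(z,L)}{r} \;\leq\; \frac{r'}{r}\, \beta_E(x',r').
\end{align*}

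Now fix $R \in \Delta_j$ and $x \in R$. By (D2) we have $\diam R \leq 2^{-j}$, and by (D3) we have $\diam R \gtrsim 2^{-j}$, so $\diam R \asymp 2^{-j}$ with constants depending only on the David constant, hence only on $\xi$. Consequently the ball $B(x,2^{1-j})$ is contained in a ball centered at a point of $R$ of radius comparable to $\diam R$; up to the universal enlargement implicit in the definition of $10R$, this sits inside the region associated to $10R$, and the two scales involved are comparable. Combining the monotonicity above with Lemma~\ref{l:Tj-beta}, we get, for every $x \in R$,
\begin{align*}
  T_{(j)}1(x) \;\lesssim_\xi\; \beta_E(x,2^{1-j})^4 \;\lesssim_\xi\; \beta_E(10R)^4,
\end{align*}
and therefore $T_{(j)}1(x)^\alpha \lesssim_\xi \beta_E(10R)^{4\alpha}$ pointwise on $R$. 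Integrating this constant bound over $R$ with respect to $\mu$ produces the factor $\mu(R)$ and yields \eqref{e:Tj-int-beta}.

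There is no real obstacle here: all the analytic content is already in Lemma~\ref{l:Tj-beta}, and what remains is the bookkeeping that the scale $2^{1-j}$ appearing there and the scale implicit in $\beta_E(10R)$ are comparable, which is exactly the statement $\diam R \asymp 2^{-j}$ furnished by (D2) and (D3), together with the trivial monotonicity of $\beta_E$ in the ball.
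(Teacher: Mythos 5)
Your proof is correct and is exactly the argument the paper intends for this "easy corollary": the pointwise bound of Lemma \ref{l:Tj-beta} combined with the comparability of the scale $2^{1-j}$ with that of $10R$ (via $\diam R \asymp 2^{-j}$ and monotonicity of $\beta_E$), followed by integration over $R$. No substantive difference from the paper's (implicit) proof.
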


\begin{remark}
  Note that we may replace the constant 1 function in \eqref{e:Tj-beta} and \eqref{e:Tj-int-beta} with any positive function $f \leq 1$ (such as $f = \chi_S$ for some $S \in \Delta$).  This is again because the kernel of $T_j$ is positive and so respects the partial ordering of positive functions.
\end{remark}

For any $Q \in \Delta$, we can also define
\begin{align*}
  T_Q \chi_S := \chi_Q T_{(j(Q))} \chi_S.
\end{align*}
Thus, we have that
\begin{align*}
  S_n\chi_S = \sum_{j \leq n} T_{(j)} \chi_S = \sum_{j \leq n} \sum_{Q \in \Delta_j} T_Q \chi_S.
\end{align*}
and so
\begin{align}
  \|S_n \chi_S\|_{L^2(S)}^2 = \sum_{j \leq n} \|T_{(j)} \chi_S\|_{L^2(S)}^2 + 2 \sum_{j < k \leq n} \langle T_{(j)}\chi_S, T_{(k)}\chi_S \rangle \label{e:q-ortho}
\end{align}
where the inner product $\langle \cdot,\cdot \rangle$ is integration on $S$.  We will bound the two terms on the right hand side separately.

Let $S^* \in \Delta_{\ell-2}$ be such that $S \subset S^*$.  It follows from (D1) that $S^*$ is unique for $S$.  It follows from the $\phi_j$ factor and the fact that cubes of $\Delta_\ell$ have diameter at most $2^{-\ell}$ that $T_{(j)}\chi_S(x) = 0$ for $x \in S \in \Delta_\ell$ whenever $j < \ell - 2$.  Thus, as $S \in \Delta_\ell$, we have
\begin{align}
  \sum_{j \leq n} \|T_{(j)} \chi_S\|_{L^2(S)}^2 \leq \sum_{\ell - 2 \leq j \leq n} \sum_{Q \in \Delta_j, Q \subseteq S} \int_Q T_{(j)}\chi_S(x)^2 ~d\mu(x) \overset{\eqref{e:Tj-int-beta}}{\lesssim_\xi} \sum_{Q \in \Delta(S^*)} \beta(10Q)^8 \mu(Q). \label{e:diag}
\end{align}

We now have to bound the off diagonal terms of \eqref{e:q-ortho}.  We have that
\begin{align}
  \sum_{j \geq \ell - 2} \sum_{j < k \leq n} \int_S T_{(j)} \chi_S(x) \cdot T_{(k)}(x) \chi_S ~d\mu(x) &\overset{\eqref{e:Tj-beta}}{\lesssim_\xi} \sum_{j \geq \ell - 2} \sum_{Q \in \Delta_j(S)} \beta(10Q)^4 \sum_{k > j} \int_Q T_{(k)} \chi_S ~d\mu(x) \notag \\
  &\overset{\eqref{e:Tj-int-beta}}{\lesssim_\xi} \sum_{Q \in \Delta(S^*)} \beta(10Q)^4 \sum_{R \in \Delta(Q)} \beta(10R)^4 \mu(R) \notag \\
  &\overset{\eqref{e:tsp}}{\lesssim_\xi} C \sum_{Q \in \Delta(S^*)} \beta(10Q)^4 \mu(Q). \label{e:off-diag}
\end{align}
Note that the constants hidden in the $\lesssim$ of \eqref{e:diag} and \eqref{e:off-diag} do not depend on $S$ or $n$.

Altogether, we have
\begin{align*}
  \|S_n\chi_S\|_{L^2(S)}^2 \overset{\eqref{e:q-ortho} \wedge \eqref{e:diag} \wedge \eqref{e:off-diag}} {\lesssim_\xi} \sum_{Q \in \Delta(S^*)} \beta(10Q)^4 \mu(Q) \overset{\eqref{e:tsp}}{\lesssim_\xi} \mu(S^*) \lesssim_{\xi,c} \mu(S),
\end{align*}
where we used properties (D2), (D3), and 1-AD regularity of $E$ in the last inequality.
\end{proof}

We now demonstrate how using a positive kernel leads to an easy proof of Corollary \ref{pvcor}.

\begin{proof}[Proof of Corollary \ref{pvcor}]
    First suppose that $f \in L^2(E)$ is a nonnegative function.  Then as the kernel $K_1$ is positive, we have for fixed $p \in E$ that $T_1^\ve f(p)$ is a monotonically increasing sequence as $\ve \to 0$ and so $\mathrm{p.v.} T_1f(p) := \lim_{\ve \to 0} T_1^\ve f(p)$ is a well defined function, although it be infinity.  By Theorem \ref{necesthm}, we get that there exists some $C > 0$ so that
    \begin{align*}
      \sup_{\ve > 0} \int (T_1^\ve f)^2 ~d\mu \leq C \int f^2 ~d\mu.
    \end{align*}
    Thus, by Fatou's lemma, we get
    \begin{align*}
      \int (\mathrm{p.v.}T_1f)^2 d\mu \leq \liminf_{\ve \to 0} \int (T_1^\ve f)^2 \leq C \int f^2 d\mu.
    \end{align*}
    This then proves the corollary for nonnegative functions.

    Now let $f \in L^2(E)$ be a real valued function.  We can decompose $f = f^+ - f^-$ where $f^+ = \max \{f,0\}$ and $f^- = \max\{-f,0\}$.  Then $\max(\|f^+\|_{L^2(E)}, \|f^-\|_{L^2(E)}) \leq \|f\|_{L^2(E)}$ and so we get that the principal values of $f^+$ and $f^-$ under $T_1$ are controlled by $C\|f\|_{L^2(E)}$.  Thus, the principal values have to be finite almost everywhere and so we get $\mathrm{p.v.} T_1f = \mathrm{p.v.} T_1f^+ - \mathrm{p.v.} T_1f^-$ as $L^2(E)$ functions.  Additionally, we get
    \begin{align*}
      \|\mathrm{p.v.} T_1f\|_{L^2(E)} \leq \|\mathrm{p.v.} T_1f^+\|_{L^2(E)} + \|\mathrm{p.v.} T_1f^-\|_{L^2(E)} \leq 2C\|f\|_{L^2(E)}.
    \end{align*}
    This proves the entire corollary.
\end{proof}

\section{Sufficient conditions}

We will need the following ``triangle inequality'' for this section.

\begin{lemma}[$NH^2$ triangle inequality]
    Let $a,b,c \in \H$ and let $A$ be the (unsigned) area of the triangle in $\R^2$ with vertices $\pi(a), \pi(b), \pi(c)$.  For the following four quantities
    \begin{align*}
        A, \quad NH(a^{-1}b)^2, \quad NH(b^{-1}c)^2, \quad NH(c^{-1}a)^2,
    \end{align*}
    any one of these numbers is less than the sum of the other three.
\end{lemma}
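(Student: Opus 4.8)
The plan is to reduce the whole statement to a trivial fact about three real numbers and their sum, after a short computation with the group law. The key observation is that $NH(p)^2$ is simply the absolute value of the third coordinate of $p$, so everything should come down to computing the third coordinates of $a^{-1}b$, $b^{-1}c$, $c^{-1}a$ and identifying their sum with (plus or minus) the signed area of the planar triangle.

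Concretely, I would write $a=(\alpha,a_3)$, $b=(\beta,b_3)$, $c=(\gamma,c_3)$ with $\alpha=\pi(a)$, $\beta=\pi(b)$, $\gamma=\pi(c)\in\R^2$, and use \eqref{eq:Heis} together with $p^{-1}=(-x,-y,-z)$ to get
$$a^{-1}b=\Big(\beta-\alpha,\ (b_3-a_3)-\tfrac12\,\alpha\wedge\beta\Big),$$
where $\alpha\wedge\beta:=a_1b_2-a_2b_1$ is the standard area form on $\R^2$, and similarly for the other two products. Thus, setting
$$u:=(b_3-a_3)-\tfrac12\,\alpha\wedge\beta,\qquad v:=(c_3-b_3)-\tfrac12\,\beta\wedge\gamma,\qquad w:=(a_3-c_3)-\tfrac12\,\gamma\wedge\alpha,$$
we have $NH(a^{-1}b)^2=|u|$, $NH(b^{-1}c)^2=|v|$, $NH(c^{-1}a)^2=|w|$.

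The second step is to compute the cyclic sum. The $a_3,b_3,c_3$ terms telescope away, leaving
$$u+v+w=-\tfrac12\big(\alpha\wedge\beta+\beta\wedge\gamma+\gamma\wedge\alpha\big)=-\tfrac12\,(\beta-\alpha)\wedge(\gamma-\alpha),$$
which is precisely minus the signed area of the triangle with vertices $\pi(a),\pi(b),\pi(c)$; hence $|u+v+w|=A$. So the four quantities in the statement are exactly $|u|,|v|,|w|,|u+v+w|$. The third and final step is then the elementary remark that for any reals $u,v,w$ with $s:=u+v+w$, each of $|u|,|v|,|w|,|s|$ is at most the sum of the other three: indeed $|s|\le|u|+|v|+|w|$ by the triangle inequality, and $|u|=|s-v-w|\le|s|+|v|+|w|$, and symmetrically for $v$ and $w$.

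I do not expect any genuine obstacle; the only point requiring care is the bookkeeping in the group-law computation — getting the factor $\tfrac12$ and the orientation of the wedge right — and confirming the standard identity that the cyclic sum $\alpha\wedge\beta+\beta\wedge\gamma+\gamma\wedge\alpha$ equals twice the signed area of the triangle, where a sign slip would be easy to make.
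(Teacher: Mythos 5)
Your proof is correct and follows essentially the same route as the paper: both reduce the statement to the scalar triangle inequality after computing, via the group law, that the three vertical coordinates of $a^{-1}b$, $b^{-1}c$, $c^{-1}a$ sum (up to sign) to the signed area of the planar triangle. The only difference is cosmetic — the paper left-translates so that $c=0$ and checks two representative cases explicitly, whereas your unnormalized cyclic-sum formulation $|u|,|v|,|w|,|u+v+w|$ handles all four cases at once.
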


\begin{proof}
    Let us first show $A$ is less than the sum of the $NH^2$.  Since everything is invariant under left-translation, we may suppose $c = (0,0,0)$, $a = (x,y,t)$, and $b = (x',y',t')$.  Then $NH(c^{-1}a)^2 = |t|$ and $NH(b^{-1}c)^2 = |t'|$ and we have
    \begin{multline*}
      A = \frac{|x'y - xy'|}{2} \leq \left| \frac{x'y - xy'}{2} - t + t' \right| + |t'| + |t| \\
      \leq NH(a^{-1}b)^2 + NH(b^{-1}c)^2 + NH(c^{-1}a)^2.
    \end{multline*}

    We now show that $NH(a^{-1}b)^2$ is less than the sum of the other three quantities.  We will keep the same normalization as the last case.
    \begin{multline*}
      NH(a^{-1}b)^2 = \left| \frac{x'y - xy'}{2} - t + t' \right| \leq \frac{|x'y - xy'|}{2} + |t'| + |t| \\
      \leq A + NH(b^{-1}c)^2 + NH(c^{-1}a)^2.
    \end{multline*}
\end{proof}

For $r < R$ and $x \in \H$, we can define the annulus
\begin{align*}
  A(x,r,R) := \{y \in \H : d(x,y) \in (r,R)\}.
\end{align*}
For three points $p_1,p_2,p_3$ in a $\H$, we define
\begin{align*}
  \partial(p_1,p_2,p_3) = \min_{\sigma \in S_3} (d(p_{\sigma(1)},p_{\sigma(2)}) + d(p_{\sigma(2)},p_{\sigma(3)}) - d(p_{\sigma(1)},p_{\sigma(3)})).
\end{align*}
For $\alpha \in (0,1)$, $r > 0$, and a metric space $X$, we let $\Sigma_X(\alpha,r)$ denote the triples of points $(p_1,p_2,p_3) \in X$ so that
\begin{align*}
  \alpha r \leq d(p_i,p_j) \leq r, \qquad \forall i \neq j.
\end{align*}
We also let $\Sigma_X(\alpha) = \bigcup_{r > 0} \Sigma_X(\alpha,r)$.  For notational convenience, we will drop the $X$ subscript when we want $X = E$ where $E$ is the 1-regular set of the hypothesis of Theorem \ref{suffthm}.

\begin{lemma} \label{l:NH-strip}
  Let $(p_1,p_2,p_3) \in \Sigma(\alpha,r)$.  If for some $\ve \in (0,1/2)$ we have
  \begin{align}
    NH(p_i^{-1}p_j) \leq \ve d(p_i,p_j), \label{e:small-NH}
  \end{align}
  then the point $\pi(p_i) \in \R^2$ is contained in the strip around the line $\overline{\pi(p_{i+1}),\pi(p_{i+2})}$ of width $16 \alpha^{-1} \ve^2 r$.
\end{lemma}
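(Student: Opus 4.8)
The plan is to reduce the claim to a statement purely about the horizontal projection $\pi$, using the $NH^2$ triangle inequality from the previous lemma to control the signed area of the projected triangle. First I would fix $i$ and relabel so that, say, $i=1$ and we look at the line through $\pi(p_2)$ and $\pi(p_3)$. The width of the strip around a line $\overline{\pi(p_2),\pi(p_3)}$ that contains $\pi(p_1)$ is exactly $2A/|\pi(p_2)-\pi(p_3)|$, where $A$ is the (unsigned) area of the Euclidean triangle with vertices $\pi(p_1),\pi(p_2),\pi(p_3)$ (base times height over two). So it suffices to bound $A$ from above and $|\pi(p_2)-\pi(p_3)|$ from below.

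For the denominator: since $(p_1,p_2,p_3)\in\Sigma(\alpha,r)$ we have $d(p_2,p_3)\geq\alpha r$, and since $d_\H(x,y)^4 = |\pi(x)-\pi(y)|^4 + NH(x^{-1}y)^4$ we get $|\pi(p_2)-\pi(p_3)|^4 = d(p_2,p_3)^4 - NH(p_2^{-1}p_3)^4 \geq (\alpha r)^4 - \ve^4 d(p_2,p_3)^4 \geq (\alpha^4 - \ve^4 r^4/(\alpha r)^4 \cdot \ldots)$; more carefully, using $d(p_2,p_3)\leq r$ and $\ve<1/2$, $NH(p_2^{-1}p_3)^4 \leq \ve^4 r^4 \leq \tfrac{1}{16}r^4$, while $d(p_2,p_3)^4\geq \alpha^4 r^4$, which is not immediately enough unless $\alpha$ is not too small — but in fact $NH(p_2^{-1}p_3)\leq \ve d(p_2,p_3)$ directly gives $|\pi(p_2)-\pi(p_3)|^4 = d(p_2,p_3)^4(1-\ve^4)\geq \tfrac{1}{2}d(p_2,p_3)^4 \geq \tfrac{1}{2}(\alpha r)^4$, hence $|\pi(p_2)-\pi(p_3)|\gtrsim \alpha r$ (concretely $\geq \alpha r/2$). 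For the numerator: by the $NH^2$ triangle inequality, $A \leq NH(p_1^{-1}p_2)^2 + NH(p_2^{-1}p_3)^2 + NH(p_3^{-1}p_1)^2 \leq \ve^2(d(p_1,p_2)^2 + d(p_2,p_3)^2 + d(p_3,p_1)^2) \leq 3\ve^2 r^2$ using $d(p_i,p_j)\leq r$. Combining, the strip width is $2A/|\pi(p_2)-\pi(p_3)| \leq 2\cdot 3\ve^2 r^2/(\alpha r/2) = 12\alpha^{-1}\ve^2 r \leq 16\alpha^{-1}\ve^2 r$, as claimed.

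The only genuinely delicate point is getting the lower bound on $|\pi(p_2)-\pi(p_3)|$ clean; everything else is bookkeeping with the two triangle inequalities (the metric one for $d_\H$ and the new $NH^2$ one) together with the $\Sigma(\alpha,r)$ size constraints. I would also double-check the elementary planar fact that the strip width equals $2A$ divided by the base length, and confirm the constant $16$ absorbs the $12$ comfortably with room to spare for any slightly looser estimate one prefers in the denominator. No deeper obstacle is expected here; the lemma is essentially a packaging of the area bound, and its role downstream will presumably be to say that when all three pairwise $NH$ values are small compared to the distances, the projected triple is nearly collinear with quantitative control.
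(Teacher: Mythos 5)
Your proposal is correct and follows essentially the same route as the paper: both bound the projected triangle's area via the $NH^2$ triangle inequality (you do it directly, getting $A\leq 3\ve^2 r^2$; the paper argues by contradiction to get $A\leq 4\ve^2 r^2$), bound the projected base from below by $\alpha r/2$ using $NH(p_2^{-1}p_3)\leq \ve d(p_2,p_3)$ with $\ve<1/2$, and conclude by the height-equals-$2A/\mathrm{base}$ identity. Your constant $12\alpha^{-1}\ve^2 r$ is even slightly sharper than the stated $16\alpha^{-1}\ve^2 r$, so nothing is missing.
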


\begin{proof}
  We will view $\pi(p_2),\pi(p_3)$ as the base of a triangle with top vertex $\pi(p_1)$.  It suffices to bound the height.  We let $A$ denote the area of the triangle.

  Suppose $A \geq 4 \ve^2 r^2$.  We have by the $NH^2$ triangle inequality that
  \begin{align*}
    NH(p_2^{-1}p_3)^2 \geq A - NH(p_1^{-1}p_2)^2 - NH(p_1^{-1}p_3)^2 \overset{\eqref{e:small-NH}}{\geq} 2 \ve^2 r^2.
  \end{align*}
  This is a contradiction of \eqref{e:small-NH}.

  Thus, we may assume $A \leq 4\ve^2 r^2$.  But if $NH(p_2^{-1}p_3) \leq d(p_2,p_3)/2$, then $|\pi(p_2) - \pi(p_3)| \geq d(p_2,p_3)/2 \geq \alpha r/2$.  Thus, the height of the triangle is less than
  \begin{align*}
    \frac{2A}{|\pi(p_2) - \pi(p_3)|} \leq \frac{16}{\alpha} \ve^2 r.
  \end{align*}
\end{proof}

Given $u,v,w \in \H$, we denote the largest and second largest quantity of
$$\left\{\frac{NH(u^{-1}v)}{d(u,v)},\frac{NH(v^{-1}w)}{d(v,w)},\frac{NH(u^{-1}w)}{d(u,w)} \right\}$$
by $\gamma_1(u,v,w)$ and $\gamma_2(u,v,w)$, respectively.

\begin{lemma} \label{l:partial-NH1}
  For all $\alpha > 0$, there exists a constant $c_1 > 0$ so that if $(p_1,p_2,p_3) \in \Sigma(\alpha,r)$, then
  \begin{align*}
    \partial(p_1,p_2,p_3) \leq c_1 \gamma_1(p_1,p_2,p_3)^4 r.
  \end{align*}
\end{lemma}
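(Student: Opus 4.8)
The plan is to bound the defect $\partial(p_1,p_2,p_3)$ of the metric triangle by the vertical fluctuation $\gamma_1^4 r$. The key observation is that if the largest ratio $\gamma_1 := NH(p_i^{-1}p_j)/d(p_i,p_j)$ is bounded below (say $\gamma_1 \geq c(\alpha)$ for some threshold), then the inequality is trivial: indeed $\partial(p_1,p_2,p_3) \leq 2 d(p_i,p_j) \leq 2r$, and if $\gamma_1^4 r \gtrsim_\alpha r$ we are done by taking $c_1$ large. So we may assume $\gamma_1 \leq \ve$ for a small $\ve = \ve(\alpha)$ to be chosen, meaning \emph{all three} pairs satisfy $NH(p_i^{-1}p_j) \leq \ve\, d(p_i,p_j)$, which is exactly hypothesis \eqref{e:small-NH} of Lemma \ref{l:NH-strip}. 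In this regime the three projected points $\pi(p_1), \pi(p_2), \pi(p_3)$ are nearly collinear in $\R^2$, each lying within a strip of width $16\alpha^{-1}\ve^2 r$ around the line through the other two.

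The second step is to transfer this near-collinearity in $\R^2$ back to the Heisenberg metric defect. First, since $NH(p_i^{-1}p_j) \leq \ve\, d(p_i,p_j)$ and $d(p_i,p_j)^4 = |\pi(p_i)-\pi(p_j)|^4 + NH(p_i^{-1}p_j)^4$, we get $|\pi(p_i) - \pi(p_j)| \leq d(p_i,p_j) \leq (1 + \ve^4)^{1/4} |\pi(p_i)-\pi(p_j)|$, so the Euclidean distances between projections are comparable to the Heisenberg distances up to an $O(\ve^4)$ error. Now consider, say, the defect term $d(p_1,p_2) + d(p_2,p_3) - d(p_1,p_3)$ (and the two cyclic variants; the minimum over $S_3$ only helps). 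Using the comparison above, this is within $O(\ve^4 r)$ of the Euclidean defect $|\pi(p_1)-\pi(p_2)| + |\pi(p_2)-\pi(p_3)| - |\pi(p_1)-\pi(p_3)|$. For three points in $\R^2$ with mutual distances $\asymp_\alpha r$, a standard computation shows that this Euclidean defect is controlled by $h^2/r$ where $h$ is the height of $\pi(p_2)$ above the base $\overline{\pi(p_1)\pi(p_3)}$; Lemma \ref{l:NH-strip} gives $h \lesssim_\alpha \ve^2 r$, hence the Euclidean defect is $\lesssim_\alpha \ve^4 r$. Combining, $\partial(p_1,p_2,p_3) \lesssim_\alpha \ve^4 r$, and since in this case $\ve$ was just a fixed function of $\alpha$ (not $\gamma_1$), we need to be slightly more careful: we should instead run the strip lemma with $\ve$ replaced by the \emph{actual} value $\gamma_1$ whenever $\gamma_1$ is below the threshold, so that the strip width is $16\alpha^{-1}\gamma_1^2 r$ and the resulting bound is genuinely $\lesssim_\alpha \gamma_1^4 r$.

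The main obstacle I anticipate is the elementary-but-fiddly step of bounding the \emph{Euclidean} triangle defect by the squared height. One has to be careful that the comparison $d(p_i,p_j) \asymp |\pi(p_i)-\pi(p_j)|$ only introduces errors of order $\gamma_1^4 r$ (which is fine, it is of the right order), but the Euclidean defect itself must be bounded by $h^2 / (\text{base length})$ — this uses that for a triangle with base $b$ and apex height $h$, the two side lengths are $\sqrt{(b/2 \pm s)^2 + h^2}$ for the foot at signed position $s$, and a Taylor expansion gives side-sum $-$ base $\leq h^2/b \cdot (\text{something bounded})$ provided $h \lesssim b$, which holds since $h \lesssim_\alpha \gamma_1^2 r$ and $b \gtrsim_\alpha r$ once $\gamma_1$ is small. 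Assembling the constants so that the final $c_1$ depends only on $\alpha$ (absorbing the regime $\gamma_1 \gtrsim_\alpha 1$ into a large constant) is the last bookkeeping point. No deep input beyond Lemma \ref{l:NH-strip} and the Korányi identity $d_\H(x,y)^4 = |\pi(x)-\pi(y)|^4 + NH(x^{-1}y)^4$ is needed.
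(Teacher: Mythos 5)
Your proposal is correct and follows essentially the same route as the paper's proof: dispose of the regime $\gamma_1\gtrsim_\alpha 1$ trivially, and otherwise use the Kor\'anyi identity to compare $d(p_i,p_j)$ with $|\pi(p_i)-\pi(p_j)|$ up to an error of order $\gamma_1^4 r$, apply Lemma \ref{l:NH-strip} with $\ve=\gamma_1$ to bound the height of the projected triangle by $\lesssim_\alpha \gamma_1^2 r$, and control the Euclidean defect by $h^2/r$ via a Taylor expansion. The only differences are cosmetic constant bookkeeping (e.g.\ $(1+\ve^4)^{1/4}$ versus the paper's $(1-c^4)^{-1/4}$), which does not affect the argument.
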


\begin{proof}
  Let $\gamma = \gamma_1(p_1,p_2,p_3)$, and we may suppose without loss of generality that
  \begin{align*}
    \partial(p_1,p_2,p_3) = d(p_1,p_2) + d(p_2,p_3) - d(p_1,p_3).
  \end{align*}

  Suppose first that $\gamma < c$ for some $c > 0$ to be determined soon.  Then
  \begin{align}
    NH(p_i^{-1}p_j) \leq \gamma d(p_i, p_j) < c d(p_i,p_j), \qquad \forall i \neq j, \label{e:small-all-NH}
  \end{align}
  and so
  \begin{equation*}
  \begin{split}
    |\pi(p_i) - \pi(p_j)| &= \left( d(p_i,p_j)^4 - NH(p_i^{-1}p_j)^4 \right)^{1/4} \geq \left( 1-  c^4 \right)^{1/4} d(p_i,p_j).
    \end{split}
  \end{equation*}
  By taking $c$ small enough, we get, that $(\pi(p_1),\pi(p_2),\pi(p_3)) \in \Sigma_{\R^2}(\alpha/2)$ and by Taylor expansion of the Kor\'anyi norm, that 
  \begin{align*}
    d(p_i,p_j) \leq |\pi(p_i) - \pi(p_j)| + \frac{NH(p_i^{-1}p_j)^4}{|\pi(p_i) - \pi(p_j)|^3} \leq |\pi(p_i) - \pi(p_j)| + (1 - c^4)^{-3/4}  \gamma^4 r,
  \end{align*}
  and so
  \begin{align}
    \partial(p_1,p_2,p_3) \leq |\pi(p_1) - \pi(p_2)| + |\pi(p_2) - \pi(p_3)| - |\pi(p_1) - \pi(p_3)| + 2 (1 - c^4)^{-3/4} \gamma^4r. \label{e:partial-bnd}
  \end{align}
  As $(\pi(p_1),\pi(p_2),\pi(p_3)) \in \Sigma_{\R^2}(\alpha/2)$, we get by a Taylor approximation of the Euclidean metric that
  \begin{align}
    |\pi(p_1) - \pi(p_2)| + |\pi(p_2) - \pi(p_3)| - |\pi(p_1) - \pi(p_3)| \lesssim_\alpha \frac{h^2}{r}, \label{e:R2-partial-bnd}
  \end{align}
  where $h$ be the height of the triangle in $\R^2$ defined by $\pi(p_i)$ with base $\pi(p_1),\pi(p_3)$.  From \eqref{e:small-NH} and \eqref{e:small-all-NH}, we have
  \begin{align}
    h \leq 16 \alpha^{-1} \gamma^2 r. \label{e:ht-bnd}
  \end{align}
  The result now follows from \eqref{e:partial-bnd}, \eqref{e:R2-partial-bnd}, and \eqref{e:ht-bnd}.

  Now suppose $\gamma \geq c$.  As $\partial(p_1,p_2,p_3) \leq 3r$, the lemma trivially follows.
\end{proof}

We let $E \subset \H$ be a set with $\mu = \Hd^1|_E$ satisfying the following estimate
\begin{align*}
    \xi r \leq \mu(B(x,r)) \leq \xi^{-1} r, \qquad \forall x \in E, r > 0,
\end{align*}
where $\xi \leq 1$.

\begin{lemma} \label{l:trichotomy}
  Let $E \subset \H$ be a 1-regular set and $\alpha \in (0,1)$.  There exists $c_2 \geq 1$ depending on $\alpha$ and $\xi$ so that if $(p_1,p_2,p_3) \in \Sigma(\alpha,r)$, then
  one of the following is true:
  \begin{enumerate}
    \item $\gamma_1(p_1,p_2,p_3) \leq c_2 \gamma_2(p_1,p_2,p_3)$,
    \item after a possible reindexing of $p_i$, there exists a set $V \subseteq E \cap B(p_1, \alpha r/10)$ with $\mu(V) \geq r/c_2$ so that for every $x \in V$ we have
    \begin{align*}
      \gamma_1(p_1,p_2,p_3) \leq c_2 \gamma_2(x,p_2,p_3),
    \end{align*}
    and $(x,p_2,p_3) \in \Sigma(c_2^{-1})$.
    \item after a possible reindexing of $p_i$, there exists sets $W_1, W_2 \subseteq E \cap B(p_1, \alpha r/5)$ with $\mu(W_1), \mu(W_2) \geq r/c_2$ so that for all $(x,y) \in W_1 \times W_2$, we have
    \begin{align*}
      \gamma_1(p_1,p_2,p_3) \leq c_2 \gamma_2(p_1,x,y),
    \end{align*}
    and $(p_1,x,y) \in \Sigma(c_2^{-1}, r)$.
  \end{enumerate}
\end{lemma}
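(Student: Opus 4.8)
The plan is to perform a case analysis on the geometry of the triple $(p_1,p_2,p_3)$, organized by which of the three ratios $NH(p_i^{-1}p_j)/d(p_i,p_j)$ is largest. Fix a constant $c_2$ to be chosen at the end, depending only on $\alpha$ and $\xi$. If $\gamma_1(p_1,p_2,p_3) \leq c_2 \gamma_2(p_1,p_2,p_3)$, we are in case (1) and there is nothing to do, so assume from now on that $\gamma_1$ is much larger than $\gamma_2$, i.e.\ one ratio strictly dominates the other two. After reindexing, suppose the dominant ratio is $NH(p_1^{-1}p_2)/d(p_1,p_2) = \gamma_1 =: \gamma$, so that $NH(p_1^{-1}p_3)/d(p_1,p_3)$ and $NH(p_2^{-1}p_3)/d(p_2,p_3)$ are both at most $\gamma/c_2$. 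The idea is that the ``badness'' $\gamma$ is concentrated on the single edge $p_1 p_2$, and we want to perturb one of its endpoints—say $p_1$—within $E$ to find a large set of replacements $x$ for which $p_1$ is no longer the bad vertex, i.e.\ $NH(x^{-1}p_2)/d(x,p_2)$ becomes comparably small, making $\gamma_2$ of the new triple large relative to the (fixed) $\gamma_1$. This splits into two subcases according to whether we can fix $p_2$ (yielding case (2)) or whether perturbing a single endpoint is not enough and we must perturb $p_1$ and produce two sets $W_1,W_2$ near it (yielding case (3)).

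First I would set up the perturbation. By 1-regularity, $E \cap B(p_1,\alpha r/10)$ has measure $\asymp_\alpha r$, and for $x$ in this ball we have $(x,p_2,p_3) \in \Sigma(c_2^{-1})$ automatically once $c_2$ is large (the pairwise distances change by at most $\alpha r/10 \ll \alpha r$). The key quantitative input is Lemma \ref{l:NH-strip} together with the $NH^2$ triangle inequality: the condition ``$NH(x^{-1}p_2)$ is not small compared to $d(x,p_2)$'' forces $\pi(x)$ to be far from the line $\overline{\pi(p_2),\pi(p_3)}$ — more precisely, if $NH(x^{-1}p_2) > \delta d(x,p_2)$ while the other two ratios stay below $\delta$, then by the $NH^2$ triangle inequality applied to the triangle with vertices $\pi(x),\pi(p_2),\pi(p_3)$ the area $A$ satisfies $A \leq NH(x^{-1}p_2)^2 + NH(p_2^{-1}p_3)^2 + NH(x^{-1}p_3)^2$, but also $A \gtrsim_\alpha r \cdot h$ where $h = \dist(\pi(x),\overline{\pi(p_2),\pi(p_3)})$; combining these and using $NH(x^{-1}p_2) \leq d(x,p_2) \lesssim r$, one sees $h$ cannot be too small relative to $NH(x^{-1}p_2)^2/r$. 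Turning this around: the set of $x \in E \cap B(p_1,\alpha r/10)$ for which $\pi(x)$ lies in a \emph{thin} strip around $\overline{\pi(p_2),\pi(p_3)}$ is exactly the set where perturbation fails to help. Since $\pi$ is 1-Lipschitz and $E$ is 1-regular, the preimage under $\pi$ of a strip of width $w$, intersected with $B(p_1,\alpha r/10)$, has $\mu$-measure $\lesssim_\xi w + (\text{vertical contribution})$; controlling the vertical fibers is where we use that $NH$ is small on most of $E$ near $p_1$ — or, alternatively, we simply take $w$ a small multiple of $r$ depending on $\alpha,\xi$, and argue that the portion of $E \cap B(p_1,\alpha r/10)$ projecting \emph{outside} the strip still has measure $\geq r/c_2$. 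That ``good'' portion gives the set $V$ of case (2): for each such $x$, $NH(x^{-1}p_2)$ is forced to be $\gtrsim$ (strip width)${}^{1/2} r^{1/2} \gtrsim_{\alpha,\xi} r$ in the appropriate normalization, hence $\gamma_2(x,p_2,p_3) \gtrsim_{\alpha,\xi} 1 \geq \gamma/c_2$ since $\gamma \leq 1$.

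The remaining possibility is that \emph{most} of $E \cap B(p_1,\alpha r/10)$ projects into the thin strip around $\overline{\pi(p_2),\pi(p_3)}$; this is the degenerate configuration in which $E$ near $p_1$ looks (in the projection) like a piece of the line through $\pi(p_2),\pi(p_3)$. In that regime I would instead perturb to create case (3): pick two sub-balls $B(p_1',\rho), B(p_1'',\rho) \subset B(p_1,\alpha r/5)$ with $p_1', p_1'' \in E$, $d(p_1',p_1'') \gtrsim_\alpha r$, and $\rho \asymp_{\alpha,\xi} r$, which exist by 1-regularity; set $W_1 = E \cap B(p_1',\rho)$, $W_2 = E \cap B(p_1'',\rho)$, each of measure $\geq r/c_2$. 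For $(x,y) \in W_1 \times W_2$ we have $(p_1,x,y) \in \Sigma(c_2^{-1},r)$ for $c_2$ large. To see that $\gamma_2(p_1,x,y) \gtrsim_{\alpha,\xi} 1$: since $\pi(x),\pi(y)$ and $\pi(p_1)$ all lie near the line $\overline{\pi(p_2),\pi(p_3)}$ but $\pi(p_1)$ is far from that line — wait, that is the wrong orientation; rather, the point is that in this degenerate case the original edge $p_1 p_2$ is the bad one precisely because of a vertical discrepancy, and replacing \emph{both} of the far endpoints of the ``good'' edges by points of $E$ near $p_1$ spreads the badness: at least one of $NH(p_1^{-1}x)/d(p_1,x)$, $NH(p_1^{-1}y)/d(p_1,y)$, $NH(x^{-1}y)/d(x,y)$ must be $\gtrsim_{\alpha,\xi}\gamma$, because otherwise all of $W_1 \cup W_2 \cup \{p_1\}$ would be $NH$-close, and then by the $NH^2$ triangle inequality comparing with $p_2$ one derives that $NH(p_1^{-1}p_2)$ is small, contradicting $\gamma$ being the dominant (large) ratio — unless $\gamma$ itself is already $\lesssim_{\alpha,\xi} 1$, in which case the estimate $\gamma_1 \leq c_2$ holds trivially since $\gamma_1 \leq 1 \leq c_2$, and we may absorb this into case (1) or simply note $\gamma_2(p_1,x,y) \geq 0$ suffices after enlarging $c_2$. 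Finally choose $c_2$ large enough to simultaneously satisfy all the threshold requirements above.

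The main obstacle will be the second and third bullets: extracting a \emph{large-measure} set of perturbations (not just one point) with quantitative control, which forces one to convert the geometric strip estimate of Lemma \ref{l:NH-strip} into a measure estimate via 1-regularity and the Lipschitz projection $\pi$, and to handle carefully the vertical ($NH$) contribution to the measure of $\pi$-preimages of strips. Distinguishing cleanly when case (2) suffices versus when one is forced into case (3) — i.e.\ identifying the degenerate projected-collinearity configuration and showing that perturbing two endpoints escapes it — is the delicate combinatorial-geometric heart of the argument.
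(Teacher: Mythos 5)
There is a genuine gap, and it originates in your choice of which vertex to perturb. You reindex so that the dominant ratio lives on the edge $p_1p_2$ and then replace $p_1$ by a nearby point $x$. But then the dominant edge is destroyed: the new triple $(x,p_2,p_3)$ consists of the two formerly ``small'' directions $xp_3$-type and the small edge $p_2p_3$, plus the edge $xp_2$. Conditions (2) and (3) require $\gamma_1(p_1,p_2,p_3)\leq c_2\,\gamma_2(\text{new triple})$, i.e.\ \emph{two} of the three ratios of the new triple must be bounded below by $\gamma_1/c_2$; the ratio on $p_2p_3$ can be arbitrarily small (even zero), so it cannot serve as the second one. Your strip/area argument, even granting it, only shows that for $x$ projecting outside a thin strip around $\overline{\pi(p_2),\pi(p_3)}$ the area of the triangle $\pi(x),\pi(p_2),\pi(p_3)$ is large, and via the $NH^2$ triangle inequality (with $NH(p_2^{-1}p_3)$ small) this forces only $\max\{NH(x^{-1}p_2),NH(x^{-1}p_3)\}$ to be large --- one large ratio, not two. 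Hence the claimed bound $\gamma_1(p_1,p_2,p_3)\leq c_2\gamma_2(x,p_2,p_3)$ does not follow. The paper's proof avoids this by reindexing the other way: the dominant ratio is placed on $p_2p_3$ and the \emph{opposite} vertex $p_1$ is perturbed, so the dominant edge survives into the new triple and one additional large ratio suffices; that single extra ratio is exactly what the strip lemma and the $NH^2$ triangle inequality produce.

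Two further steps do not hold up. First, your dichotomy ``either a definite proportion of $E\cap B(p_1,\alpha r/10)$ projects outside the thin strip, or $E$ near $p_1$ is projected-collinear'' is not quantitatively justified: for a general $1$-regular set the $\mu$-measure of $\pi^{-1}(\text{strip of width }w)$ is not controlled by $w$ (vertical fibers), and you flag this but do not resolve it. The paper's dichotomy is instead a pure pigeonhole in measure on an annulus around $p_1$, splitting according to whether $NH(x^{-1}p_1)\geq c\gamma_1 d(x,p_1)$ or not on a large-measure subset; no projection-measure estimate is needed. Second, your treatment of the degenerate branch (your case (3)) is not a proof: the assertion ``at least one of the three ratios of $(p_1,x,y)$ must be $\gtrsim\gamma$'' is both unsubstantiated and insufficient, since condition (3) again needs the \emph{second largest} ratio to be $\gtrsim\gamma_1$; and the fallback ``$\gamma_1\leq 1\leq c_2$ so the estimate holds trivially'' is a non sequitur, because the required inequality is $\gamma_1\leq c_2\gamma_2$, and $\gamma_2$ can vanish. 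In the paper, case (3) is reached precisely when a large-measure subset $S$ of the annulus satisfies $NH(z^{-1}p_1)\geq c\gamma_1 d(z,p_1)$; then a packing argument yields two well-separated pieces $W_1,W_2\subseteq S$, and for any $x\in W_1$, $y\in W_2$ the triple $(p_1,x,y)$ automatically has \emph{two} edges ($p_1x$ and $p_1y$) with large $NH$ ratio, which is exactly what condition (3) demands. To repair your argument you would need to adopt the opposite reindexing (perturb the vertex off the dominant edge), replace the projection-measure dichotomy by the annulus pigeonhole, and rebuild the degenerate branch along these lines --- at which point you have essentially reconstructed the paper's proof.
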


\begin{proof}
  Throughout this proof, we will give a finite series of lower bounds for $c_2$.  The final $c_2$ will then just be the maximum of these lower bounds.  For simplicity of notation, let $\gamma_i = \gamma_i(p_1,p_2,p_3)$.  We may of course suppose that $\gamma_2 \leq c \gamma_1$ for some small $c > 0$ depending on $\alpha$ and $\xi$ to be determined as otherwise Condition (1) would be satisfied.  Without loss of generality, we can assume that $\gamma_1 = NH(p_2^{-1}p_3)/d(p_2,p_3)$.  Let $A$ denote the area of the triangle in $\R^2$ with vertices $\pi(p_i)$.  Then we have from $NH^2$ triangle inequality that
  \begin{align*}
    NH(p_2^{-1}p_3)^2 \leq NH(p_1^{-1}p_2)^2 + NH(p_1^{-1}p_3)^2 + A,
  \end{align*}
  and so if we set $c < \alpha/2$ (while still allowing ourselves to take $c$ smaller) then
  \begin{align}
    A \geq \frac{\alpha^2}{2} \gamma_1^2 r^2. \label{e:big-triangle}
  \end{align}

  Fix $\lambda \in (0,1)$ depending only $\xi$ so that
  \begin{align*}
    \mu(A(x,\lambda \ell,\ell)) \geq \frac{1}{2} \xi \ell, \qquad \forall x \in E, \ell > 0.
  \end{align*}
  Suppose now $A(p_1, \lambda \alpha r/10, \alpha r/10)$ contains a subset $S$ of $\mu$-measure at least $\xi \alpha r/40$ so that
  \begin{align}
    \frac{NH(x^{-1}p_1)}{d(x,p_1)} < c \gamma_1, \qquad \forall x \in S. \label{e:small-NH-p1}
  \end{align}
  If there is a further subset $V \subseteq S$ with $\mu(V) \geq \xi \alpha r/80$ so that $NH(x^{-1}p_2) \geq c \gamma_1 d(x,p_2)$ for each $x \in V$, then we are done as we've satisfied Condition (2) for large enough $c_2$ if we keep $p_2,p_3$ and draw $x$ from $V$.
  
  Thus, suppose there is a subset $V \subseteq S$ with $\mu(V) \geq \xi \alpha r/80$ and 
  \begin{align}
    \frac{NH(x^{-1}p_2)}{d(x,p_2)} < c \gamma_1, \qquad \forall x \in V. \label{e:small-NH-p2}
  \end{align}
  Recalling
  \begin{align}
    d(x,p_1) \in \left[\frac{\lambda \alpha r}{10}, \frac{\alpha r}{10} \right], \quad d(x,p_2) \in \left[ \frac{r}{2}, 2r \right], \quad \forall x \in V \subseteq A\left(p_1, \frac{\lambda \alpha r}{10}, \frac{\alpha r}{10} \right) \label{e:x-comp}
  \end{align}
  we get from \eqref{e:small-NH-p1}, \eqref{e:small-NH-p2}, and Lemma \ref{l:NH-strip} that for every $x \in V$, $\pi(x)$ lies in the strip around $\overline{\pi(p_1),\pi(p_2)}$ of width
  \begin{align}
    w = \frac{640}{\lambda \alpha} c^2 \gamma_1^2 r. 
    \label{e:small-width}
  \end{align}
  As $NH(x^{-1}p_1) < c\gamma_1 d(x,p_1)$, we easily get (supposing $c$ is small enough) that
  \begin{align}
    |\pi(x) - \pi(p_1)| \geq \frac{1}{2} d(x,p_1) \overset{\eqref{e:x-comp}}{\geq} \frac{\lambda \alpha}{20} r. \label{e:pi-x-p_1}
  \end{align}
  As $d(p_1,p_2) \leq r$, we get that the height of the triangle given by $\pi(p_i)$ with base $\overline{\pi(p_1),\pi(p_2)}$ is then at least
  \begin{align*}
    h \geq \frac{2A}{d(p_1,p_2)} \overset{\eqref{e:big-triangle}}{\geq} \alpha^2 \gamma_1^2 r.
  \end{align*}

\begin{figure}
\centering
\includegraphics[scale = 0.55]{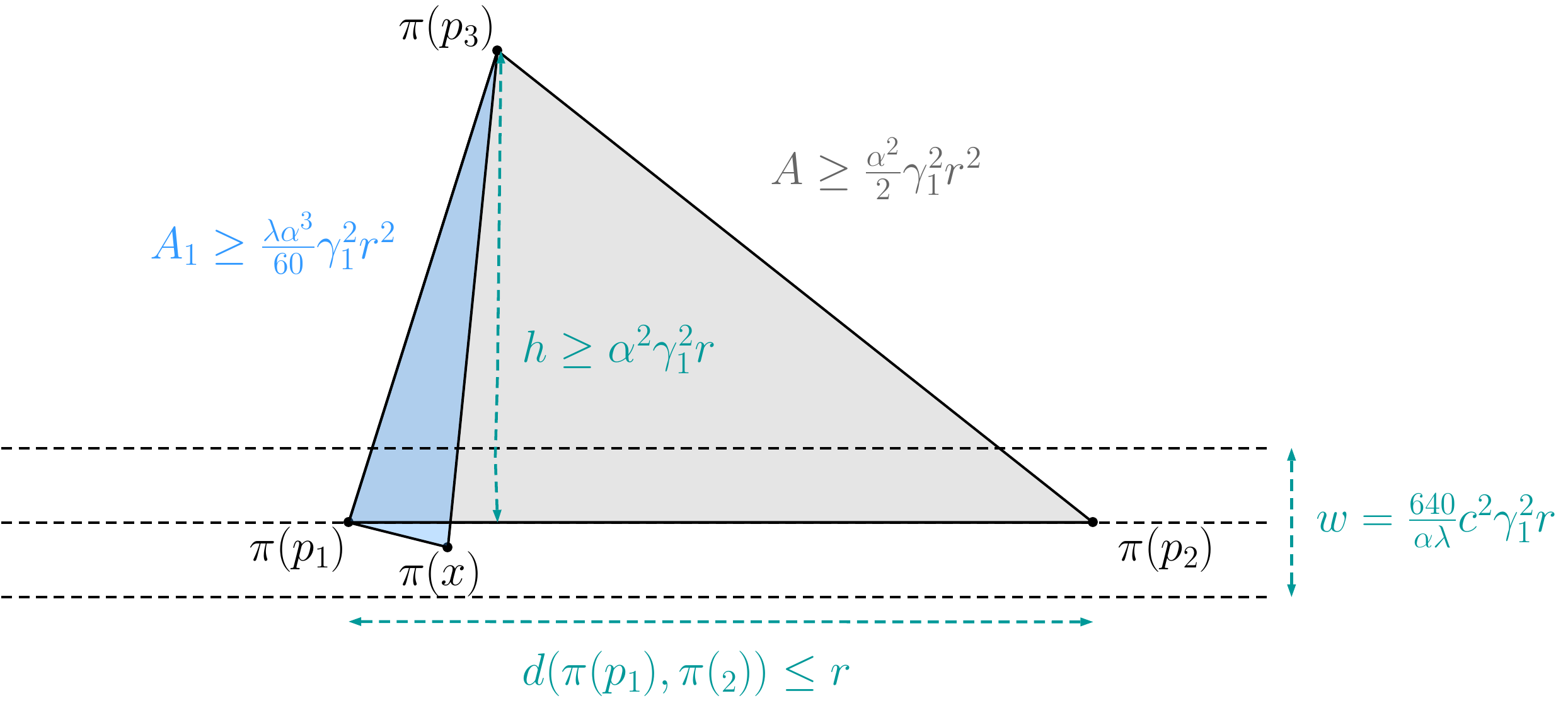}
\caption{$A$ denotes the area of the triangle determined by $\pi(p_i),i=1,2,3,$ and $A_1$ denotes the area of the triangle determined by $\pi(p_1), \pi(p_3)$ and $\pi(x)$.}
\label{fig1}
\end{figure}
  Let $A_1$ denote the area of the triangle determined by $\pi(p_1),\pi(x),\pi(p_3)$.  By \eqref{e:small-width}, we have that $w$ is at most some constant multiple (depending on $\alpha$ and $\lambda$) of $c^2 h$.  Thus, if we choose $c$ small enough to get $\pi(x)$ sufficiently close to the line $\overline{\pi(p_1),\pi(p_2)}$ compared to $h$, we get
  \begin{align*}
    A_1 \geq \frac{h |\pi(p_1) - \pi(x)|}{3} \overset{\eqref{e:pi-x-p_1}}{\geq} \frac{\lambda \alpha^3}{60} \gamma_1^2 r^2.
  \end{align*}
  See Figure \ref{fig1} for an illustration of these triangles.

  Now using the $NH^2$ triangle inequality, we get
  \begin{equation*}
  \begin{split}
    \frac{\alpha^3 \lambda}{60} \gamma_1^2 r^2 &\leq A_1 \leq NH(x^{-1}p_1)^2 + NH(p_1^{-1}p_3)^2 + NH(x^{-1}p_3)^2 \\
    & \overset{\eqref{e:small-NH-p1} \wedge \eqref{e:x-comp}}{\leq} 2c^2 \gamma_1^2 r^2 + NH(x^{-1}p_3)^2.
  \end{split}
  \end{equation*}
  Thus, if we choose $c$ small enough compared to $\alpha$ and $\lambda$ once and for all, we get that
  \begin{align*}
    NH(x^{-1}p_3) \geq \frac{\sqrt{\alpha^3 \lambda}}{10} \gamma_1 r \geq \frac{\sqrt{\alpha^3 \lambda}}{20} \gamma_1 d(x,p_3).
  \end{align*}
  We now see that we can satisfy Condition (2) for sufficiently large $c_2$ by keeping $p_2,p_3$ and drawing $x$ from $V$

  Thus, we may suppose that $E \cap A(p_1,\lambda \alpha r/10,\alpha r/10)$ contains a subset $S$ so that $\mu(S) \geq \xi\alpha r/40$ and
  $$NH(z^{-1}p_1) \geq c \gamma_1 d(z,p_1), \qquad \forall z \in S.$$
  Using 1-regularity of $E$, an elementary, although tedious, packing argument shows that there exist $\eta, \tau < \lambda \alpha / 100$ depending only on $\alpha$ and $\xi$ and points $x',y' \in E \cap A(p_1,\lambda \alpha r/10, \alpha r/10)$ so that $d(x',y') \geq 10 \tau r$ and
  $$\min\{ \mu(S \cap B(x',\tau r)), \mu(S \cap B(y',\tau r)) \} \geq \eta r.$$
  Note by the triangle inequality that we get
  \begin{align*}
    B(x', \tau r), B(y', \tau r) \subseteq A(p_1, \lambda \alpha r/20, \alpha r/5)  
  \end{align*}
  Thus, after setting $c_2$ large enough, we've satisfied Condition (3) with $W_1 = S \cap B(x',\tau r)$ and $W_2 = S \cap B(y', \tau r)$, which would completely finish the proof of the lemma.  We will present a quick sketch of the packing argument and leave the details to the reader.
  
  Find a maximal $\tau r$-separated net $\mathcal{N}$ of $E \cap B(p_1, \alpha r)$ for $\tau > 0$ to be determined.  By 1-regularity, we have $\# \mathcal{N} \gtrsim \alpha/\tau$.  First use 1-regularity of $E$ to find $M \geq 1$ so that any subset $S \subseteq \mathcal{N}$ for which $\# S \geq M$ must contain $x',y' \in S$ so that $d(x',y') \geq 10 \tau r$.  Now $\{B(x,\tau r) : x \in \mathcal{N}\}$ is a covering of $B(p_1, \alpha r/10)$.  Define $\mathcal{B} = \{B(x,\tau r) : x \in \mathcal{N}, \mu(S \cap B(x,r)) \geq \eta r\}$.  By choosing $\eta$ small enough relative to $\alpha \tau$, we can use 1-regularity of $E$ and the fact that $\mu(S) \gtrsim \alpha r$ to get that $\# \mathcal{B} \gtrsim \alpha \mathcal{N} \gtrsim \alpha^2/\tau$ (with no dependence on $\eta$).  Now simply choose $\tau$ small enough so that $\# \mathcal{B} \geq M$.  One then finds two ball $B(x',\tau r), B(y',\tau r) \in \mathcal{B}$ so that $d(x',y') \geq 10 \tau r$, which finishes the sketch.
\end{proof}

For $x,y \in E$, we let
\begin{align*}
  \Sigma(\alpha,r;x) &:= \{(y,z) \in E^2 : (x,y,z) \in \Sigma(\alpha,r)\}, \\
  \Sigma(\alpha;x,y) &:= \{z \in E : (x,y,z) \in \Sigma(\alpha)\}.
\end{align*}
One easily has that there exists some constant $c_3 \geq 1$ depending on $\xi$ so that
\begin{align*}
  \frac{1}{c_3} r^2 \leq \mu \times \mu(\Sigma(\alpha,r;x)) \leq c_3 r^2, \quad \frac{1}{c_3} d(x,y) \leq \mu(\Sigma(\alpha;x,y)) \leq c_3 d(x,y).
\end{align*}

For simplicity of notation, we will adopt the convention that the integral $\int_A f(x) ~dx$ means $\int_A f(x) ~d\mu(x)$ when $A \subseteq E$.  Recall that for three points $p_1,p_2,p_3$ in a metric space $X$, its Menger curvature $c(p_1,p_2,p_3) \in \R$ is defined as
\begin{align*}
  c(p_1,p_2,p_3) = \frac{1}{R},
\end{align*}
where $R$ is the radius of the circle in $\R^2$ passing through a triangle defined by the vertices $p_1',p_2',p_3' \in \R^2$ where $d(p_i,p_j) = |p_i' - p_j'|$.

\begin{proposition}
  For any $\alpha > 0$, there exist $c_4 \geq 1$ so that
  \begin{align}
    \iiint_{\Sigma(\alpha)} c(x,y,z)^2 ~dx ~dy ~dz \leq c_4 \iiint_{\Sigma(c_4^{-1})} \frac{\gamma_1(x,y,z)^2 \gamma_2(x,y,z)^2}{\diam(\{x,y,z\})^2} ~dx ~dy ~dz. \label{e:gamma-bound}
  \end{align}
\end{proposition}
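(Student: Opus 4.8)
The plan is to reduce the integrand $c(x,y,z)^2$ to $\gamma_1(x,y,z)^4/\diam(\{x,y,z\})^2$ by two short computations, and then to use the trichotomy of Lemma \ref{l:trichotomy} to trade one of the two factors $\gamma_1^2$ for $\gamma_2^2$, at the cost of enlarging the family of admissible triples from $\Sigma(\alpha)$ to $\Sigma(c_4^{-1})$. For the first reduction, note that if $(x,y,z)\in\Sigma(\alpha,r)$ then the Euclidean comparison triangle defining $c(x,y,z)$ has all three side lengths $a\geq b\geq c$ lying in $[\alpha r,r]$, so Heron's identity
\begin{align*}
16\,(\mathrm{Area})^2 = (a+b+c)(-a+b+c)(a-b+c)(a+b-c),
\end{align*}
together with $-a+b+c=\partial(x,y,z)$ and $a+b+c,\,a-b+c,\,a+b-c\asymp_\alpha r$, gives $c(x,y,z)^2=16(\mathrm{Area})^2/(abc)^2\asymp_\alpha \partial(x,y,z)/\diam(\{x,y,z\})^3$. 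Feeding in Lemma \ref{l:partial-NH1}, which says $\partial(x,y,z)\lesssim_\alpha \gamma_1(x,y,z)^4\,r\asymp_\alpha \gamma_1(x,y,z)^4\diam(\{x,y,z\})$, we obtain
\begin{align*}
\iiint_{\Sigma(\alpha)} c(x,y,z)^2~dx~dy~dz \lesssim_\alpha \iiint_{\Sigma(\alpha)} \frac{\gamma_1(x,y,z)^4}{\diam(\{x,y,z\})^2}~dx~dy~dz.
\end{align*}

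Next I would run the redistribution scale by scale. Fix $c_4\geq\max\{\alpha^{-1},c_2\}$, where $c_2$ is the constant from Lemma \ref{l:trichotomy} (so that $\Sigma(\alpha,r)\subseteq\Sigma(c_4^{-1},r)$), and split $\Sigma(\alpha)$ into the pieces with $\diam(\{x,y,z\})\asymp 2^{-j}$, $j\in\Z$; since both sides of \eqref{e:gamma-bound} are symmetric in $x,y,z$ the reindexing in Lemma \ref{l:trichotomy} may be ignored. For a triple $T=(p_1,p_2,p_3)\in\Sigma(\alpha,r)$ one of the three alternatives holds. In alternative (1), $\gamma_1(T)^4\leq c_2^2\gamma_1(T)^2\gamma_2(T)^2$ and $T\in\Sigma(c_4^{-1},r)$, so these triples are already of the desired form. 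In alternative (2) there is $V\subseteq E\cap B(p_1,\alpha r/10)$ with $\mu(V)\geq r/c_2$ such that for every $x\in V$ we have $(x,p_2,p_3)\in\Sigma(c_2^{-1})$ with all pairwise distances $\asymp_\alpha r$ and $\gamma_1(x,p_2,p_3)\geq\gamma_2(x,p_2,p_3)\geq c_2^{-1}\gamma_1(T)$; hence
\begin{align*}
\frac{\gamma_1(T)^4}{\diam(T)^2}\lesssim_\alpha \frac{1}{\mu(V)}\int_V \frac{\gamma_1(x,p_2,p_3)^2\gamma_2(x,p_2,p_3)^2}{\diam(\{x,p_2,p_3\})^2}~d\mu(x),
\end{align*}
and integrating this inequality in $p_1,p_2,p_3$ and exchanging the order of integration bounds the alternative (2) part of the scale-$r$ integral by $\lesssim_\alpha c_2$ times the integral of $\gamma_1^2\gamma_2^2/\diam^2$ over the part of $\Sigma(c_2^{-1})$ with diameter $\asymp_\alpha r$; here one uses that for fixed $x,p_2,p_3$ the admissible $p_1$ all lie in $B(x,\alpha r/10)$, which has $\mu$-measure $\lesssim r$ and so cancels $\mu(V)^{-1}\leq c_2/r$. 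Alternative (3) is handled identically: the perturbed quantity $\gamma_1(p_1,x,y)^2\gamma_2(p_1,x,y)^2/\diam(\{p_1,x,y\})^2$ does not involve $p_2,p_3$, so integrating out $p_2,p_3$ costs $\lesssim r^2$, exactly cancelling $(\mu(W_1)\mu(W_2))^{-1}\leq c_2^2/r^2$, and Fubini places the surviving triples $(p_1,x,y)$ in the diameter-$\asymp_\alpha r$ part of $\Sigma(c_2^{-1})$.

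Summing over $j$, and noting that each diameter-$\asymp_\alpha 2^{-j}$ slab of $\Sigma(c_2^{-1})$ gets counted boundedly often (with a bound depending only on $\alpha$), we get $\iiint_{\Sigma(\alpha)}\gamma_1^4/\diam^2\lesssim_{\alpha,\xi}\iiint_{\Sigma(c_2^{-1})}\gamma_1^2\gamma_2^2/\diam^2\leq\iiint_{\Sigma(c_4^{-1})}\gamma_1^2\gamma_2^2/\diam^2$, which together with the first step proves \eqref{e:gamma-bound} after enlarging $c_4$ once more to absorb the implicit constant. The main obstacle is the bookkeeping in this second step: checking that the perturbed triples produced by Lemma \ref{l:trichotomy} really belong to $\Sigma(c_2^{-1})$ at a scale comparable to that of $T$, justifying the Fubini exchanges despite $V$, $W_1$, $W_2$ depending on the whole triple $T$ and not merely on its (reindexed) base point, and verifying the bounded overlap of the dyadic decomposition. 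The two computations of the first step — the Heron comparison of $c$ with $\partial$ and the application of Lemma \ref{l:partial-NH1} — are routine.
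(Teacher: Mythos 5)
Your proposal is correct and follows essentially the same route as the paper: bound $c^2$ by $\partial/\diam^3$ on $\Sigma(\alpha)$ (the paper cites Hahlomaa for this where you rederive it via Heron's formula), apply Lemma \ref{l:partial-NH1} to pass to $\gamma_1^4/\diam^2$, and then use the trichotomy of Lemma \ref{l:trichotomy} with an averaging/Fubini argument over $V$ and $W_1\times W_2$ to trade $\gamma_1^4$ for $\gamma_1^2\gamma_2^2$ at the cost of enlarging to $\Sigma(c_2^{-1})$. The only cosmetic difference is bookkeeping: you decompose dyadically in the diameter for all three alternatives, while the paper handles alternative (2) directly for fixed $y,z$ using $\mu(\Sigma(\alpha;y,z))\lesssim d(y,z)$ and inserts the continuous scale factor $\int_0^\infty \mathbf{1}_{\{r:(x,y)\in\Sigma(\alpha/2,r;z)\}}\,\frac{dr}{r}\asymp_\alpha 1$ only for alternative (3).
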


\begin{proof}
  We have by \cite{hahlomaa-2} that there exists some $\tau > 0$ depending on $\alpha$ so that if $(x,y,z) \in \Sigma(\alpha)$, then
  \begin{align}
    c(x,y,z)^2 \leq \tau \diam(\{x,y,z\})^{-3} \partial(x,y,z). \label{e:c2-partial}
  \end{align}
  By Lemma \ref{l:partial-NH1}, we have that there exist $c_1 > 0$ so that
  \begin{align}
    \iiint_{\Sigma(\alpha)} \diam(\{x,y,z\})^{-3} \partial(x,y,z) ~dx ~dy ~dz \leq c_1 \iiint_{\Sigma(\alpha)} \frac{\gamma_1(x,y,z)^4}{\diam(\{x,y,z\})^2} ~dx ~dy ~dz. \label{e:partial-gamma}
  \end{align}
  We now decompose $\Sigma(\alpha)$ into three pieces.  For $i = 1,2,3$, let $S_i \subseteq \Sigma(\alpha)$ denote the triples of points for which Condition (i) of Lemma \ref{l:trichotomy} holds for some $r > 0$ (that can depend on the triple of points).  Note $\Sigma(\alpha) \subseteq S_1 \cup S_2 \cup S_3$, but this decomposition need not be disjoint.

  It will be convenient to define the functions
  \begin{align*}
    f(x,y,z) := \frac{\gamma_1(x,y,z)^4}{\diam(\{x,y,z\})^2}, \quad g(x,y,z) := \frac{\gamma_1(x,y,z)^2 \gamma_2(x,y,z)^2}{\diam(\{x,y,z\})^2}.
  \end{align*}
  We trivially have that
  \begin{align}
    \iiint_{S_1} f(x,y,z) ~dx ~dy ~dz \leq c_2^2 \iiint_{S_1} g(x,y,z) ~dx ~dy ~dz. \label{e:S1-leq}
  \end{align}

  When we write a triple of points $(x,y,z) \in S_2$, we will always assume $y,z$ play the role of $p_2,p_3$ in Condition (2).  Now let $(x,y,z) \in S_2 \cap \Sigma(\alpha)$.  We then have that there exists a subset 
  with $\mu(V) \geq r/c_2$,
  \begin{align*}
    f(x,y,z) \leq c_2 g(u,y,z), \qquad \forall u \in V.
  \end{align*}
  We then have that
  \begin{align*}
    f(x,y,z) \leq c_2 \frac{1}{\mu(V)} \int_V g(u,y,z) ~du.
  \end{align*}
  We also have that $(u,y,z) \in \Sigma(c_2^{-1})$ for all $ u \in V$ and so
  \begin{align*}
    \int_{\Sigma(\alpha;y,z)} f(x,y,z) ~dx &\leq c_2 \frac{\mu(\Sigma(\alpha;y,z))}{\mu(V)} \int_V g(u,y,z) ~du \\
    &\leq c_2^2 c_3 \int_{\Sigma(c_2^{-1};y,z)} g(u,y,z) ~du.
  \end{align*}
  Now we have
  \begin{align}
    \iiint_{S_2} f(x,y,z) ~dx ~dy ~dz &= \iiint_{\Sigma(\alpha)} {\bf 1}_{S_2} f(x,y,z) ~dx ~dy ~dz \notag \\
    &\leq \int_E \int_E \int_{\Sigma(\alpha;y,z)} {\bf 1}_{S_2} f(x,y,z) ~dx ~dy ~dz \notag \\
    &\leq c_2^2 c_3 \int_E \int_E \int_{\Sigma(c_2^{-1};y,z)} g(x,y,z) ~dx ~dy ~dz \notag \\
    &\leq 6c_2^2 c_3 \iiint_{\Sigma(c_2^{-1})} g(x,y,z) ~dx ~dy ~dz. \label{e:S2-leq}
  \end{align}

  For $S_3$, we will write the points $(x,y,z)$ with the understanding that $z$ plays the role of $p_1$ in Condition (3).  Now let $(x,y,z) \in S_3 \cap \Sigma(\alpha/2,r)$.  In a way similar to above, we can use the properties of the conclusion of Property (3) to get that
  \begin{align*}
    f(x,y,z) \leq c_2^2 c_3 \iint_{\Sigma(c_2^{-1},r;z)} g(u,v,z) ~du ~dv.
  \end{align*}
  
  It is elementary to see that if $(x,y,z) \in \Sigma(\alpha)$, then $\int_0^\infty {\bf 1}_{\{r : (x,y) \in \Sigma(\alpha/2,r;z)\}} \frac{dr}{r} \asymp_\alpha 1$.  Here, we need the extra factor of 1/2 in case $(x,y,z)$ acheives tightness in the $\Sigma(\alpha)$ condition.  We can now decompose the integral
  \begin{align}
    \iiint_{S_3} f(x,y,z) ~dx ~dy ~dz &\lesssim_\alpha \iiint_{S_3} f(x,y,z) \int_0^\infty {\bf 1}_{\{r : (x,y) \in \Sigma(\alpha/2,r;z)\}} \frac{dr}{r} ~dx ~dy ~dz \notag \\
    &\leq \int_E \int_0^\infty \iint_{\{(x,y) \in \Sigma(\alpha/2,r;z) : (x,y,z) \in S_3\}} f(x,y,z) ~dx ~dy \frac{dr}{r} ~dz \notag \\
    &\leq c_2^2 c_3 \int_E \int_0^\infty \iint_{\Sigma(c_2^{-1},r;z)} g(u,v,z) ~du ~dv \frac{dr}{r} ~dz \notag \\
    &\lesssim_\alpha \int_{\Sigma(c_2^{-1})} g(x,y,z) \int_0^\infty {\bf 1}_{\{r : (u,v) \in \Sigma(c_2^{-1},r;z)\}} \frac{dr}{r} ~du ~dv ~dz \\
    &\lesssim \iiint_{\Sigma(c_2^{-1})} g(x,y,z) ~dx ~dy ~dz. \label{e:S3-leq}
  \end{align}
  In the second and penultimate inequality, we used Fubini.  We then get the conclusion from \eqref{e:c2-partial}, \eqref{e:partial-gamma}, \eqref{e:S1-leq}, \eqref{e:S2-leq}, \eqref{e:S3-leq}.
\end{proof}

\begin{proof}[Proof of Theorem \ref{suffthm}]
 By a result of Hahlomaa, see \cite[p.123]{hahlomaa} it suffices to show that for some $\alpha > 0$,
   \begin{align*}
    \iiint_{\Sigma(\alpha) \cap B(p,R)^3} c^2(y_1,y_2,y_3)~dy_1 ~dy_2 ~dy_3 \lesssim R, \qquad \forall p \in E, R > 0.
  \end{align*}
Hence by \eqref{e:gamma-bound}, it is enough to prove that for some $\alpha > 0$.
  \begin{align}
    \iiint_{\Sigma(\alpha) \cap B(p,R)^3} \frac{\gamma_1(y_1,y_2,y_3)^2 \gamma_2(y_1,y_2,y_3)^2}{\diam(\{y_1,y_2,y_3\})^2} ~dy_1 ~dy_2 ~dy_3 \lesssim R, \qquad \forall p \in E, R > 0. \label{e:want-gamma-bound}
  \end{align}
 By our assumption for all $\ve>0$ and  every $f \in L^2(E)$,
 \begin{equation}
 \label{l2bd}
 \|T_2^\ve f \|_{L^2(E)} \lesssim \|f\|_{L^2(E)}.
 \end{equation}
Let $p \in E$ and $R>0$. Applying \eqref{l2bd} to $f=\chi_{B(p,R)}$ we get that there exists some $C \geq 0$ so that for every $\ve > 0$,
  \begin{align*}
    \int_{E \cap B(p,R)} \int_{E \cap B(p,r) \cap B(y_1,\ve)^c} \frac{NH(y_1^{-1}y_2)^2}{d(y_1,y_2)^3} ~dy_2 \int_{E \cap B(p,r) \cap B(y_1,\ve)^c} \frac{NH(y_1^{-1}y_3)^2}{d(y_1,y_3)^3} ~dy_3 ~dy_1 \leq C R.
  \end{align*}
  \begin{align*}
U_\ve &=\{(y_1,y_2,y_3) \in \Sigma(\alpha) \cap B(p,R)^3: d(y_1,y_2)>\ve, d(y_1,y_3)>\ve \},\\
V_\ve&=\{(y_1,y_2,y_3) \in \Sigma(\alpha) \cap B(p,R)^3: d(y_1,y_2)>\ve, d(y_1,y_3)>\ve, d(y_2,y_3)>\ve \},\\
\end{align*}
  It then easily follows from Fubini (remember that all the terms in the integrand are positive) that
  \begin{align}
   \label{mmvfub1}
    \iiint_{U_\ve} \frac{NH(y_1^{-1}y_2)^2 NH(y_1^{-1}y_3)^2}{\diam(\{y_1,y_2,y_3\})^6} ~dy_1 ~dy_2 ~dy_3 \leq C R.
  \end{align}
  Therefore,
  \begin{equation}
  \begin{split}
  \label{mmvfub2}
  C R& \geq  \iiint_{V_\ve} \frac{NH(y_1^{-1}y_2)^2 NH(y_1^{-1}y_3)^2}{\diam(\{y_1,y_2,y_3\})^6} ~dy_1 ~dy_2 ~dy_3 \\
  &\quad\quad\quad\quad + \iiint_{U_\ve \stm V_\ve} \frac{NH(y_1^{-1}y_2)^2 NH(y_1^{-1}y_3)^2}{\diam(\{y_1,y_2,y_3\})^6} ~dy_1 ~dy_2 ~dy_3.
 \end{split}
  \end{equation}
Using the upper regularity $\mu$ it is not difficult to show that 
\begin{equation}
\label{mmvfub3}
\iiint_{U_\ve \stm V_\ve} \frac{NH(y_1^{-1}y_2)^2 NH(y_1^{-1}y_3)^2}{\diam(\{y_1,y_2,y_3\})^6} ~dy_1 ~dy_2 ~dy_3 \lesssim_\xi R.
\end{equation}
Using \eqref{mmvfub1},\eqref{mmvfub2}, \eqref{mmvfub3} and letting $\ve \ra 0$ we deduce that
  \begin{align*}
    \iiint_{\Sigma(\alpha) \cap B(p,R)^3} \frac{NH(y_1^{-1}y_2)^2 NH(y_1^{-1}y_3)^2}{\diam(\{y_1,y_2,y_3\})^6} ~dy_1 ~dy_2 ~dy_3 \leq C R.
  \end{align*}
By permuting variables, we get
  \begin{align}
    \iiint_{\Sigma(\alpha) \cap B(p,R)^3} \sum_{\sigma \in S_3} \frac{NH(y_{\sigma(1)}^{-1}y_{\sigma(2)})^2 NH(y_{\sigma(1)}^{-1}y_{\sigma(3)})^2}{\diam(\{y_1,y,y_3\})^6} ~dy_1 ~dy_2 ~dy_3 \leq 6CR. \label{e:have-permute-bound}
  \end{align}
  If $(y_1,y_2,y_3) \in \Sigma(\alpha)$, then it follows easily that
  \begin{align}
    \frac{\gamma_1(y_1,y_2,y_3)^2 \gamma_2(y_1,y_2,y_3)^2}{\diam(\{y_1,y_2,y_3\})^2} &\lesssim \max_{\sigma \in S_3} \frac{NH(y_{\sigma(1)}^{-1}y_{\sigma(2)})^2 NH(y_{\sigma(1)}^{-1}y_{\sigma(3)})^2}{\diam(\{y_1,y_2,y_3\})^6} \notag \\
    &\leq \sum_{\sigma \in S_3} \frac{NH(y_{\sigma(1)}^{-1}y_{\sigma(2)})^2 NH(y_{\sigma(1)}^{-1}y_{\sigma(3)})^2}{\diam(\{y_1,y_2,y_3\})^6}, \label{e:have-sigma-permute} 
  \end{align}
  where the constant multiple implicit in the first inequality depends on $\alpha$.  We then get \eqref{e:want-gamma-bound} from \eqref{e:have-permute-bound} and \eqref{e:have-sigma-permute}.
\end{proof}
\begin{bibdiv}
\begin{biblist}

\bib{blu}{book}{
AUTHOR = {Bonfiglioli, A.},
AUTHOR={Lanconelli, E.},
AUTHOR={Uguzzoni, F.},
     TITLE = {Stratified {L}ie groups and potential theory for their
              sub-{L}aplacians},
    SERIES = {Springer Monographs in Mathematics},
 PUBLISHER = {Springer, Berlin},
      YEAR = {2007},
}

\bib{CMPT}{article}{
 AUTHOR = {Chousionis, V.},
 AUTHOR={Mateu, J. },
 AUTHOR={Prat, L.},
 AUTHOR={Tolsa, X.},
     TITLE = {Calder\'on-{Z}ygmund kernels and rectifiability in the plane},
   JOURNAL = {Adv. Math.},
  FJOURNAL = {Advances in Mathematics},
    VOLUME = {231},
      YEAR = {2012},
    NUMBER = {1},
     PAGES = {535--568}
     }
     
        \bib{CM1}{article}{
 AUTHOR = {Chousionis, V.},
 AUTHOR={Mattila, P. },
     TITLE = {Singular integrals on {A}hlfors-{D}avid regular subsets of the
              {H}eisenberg group},
   JOURNAL = {J. Geom. Anal.},
  FJOURNAL = {Journal of Geometric Analysis},
    VOLUME = {21},
      YEAR = {2011},
    NUMBER = {1},
     PAGES = {56--77},
     }
     \bib{CM}{article}{
 AUTHOR = {Chousionis, V.},
 AUTHOR={Mattila, P. },
        TITLE = {Singular integrals on self-similar sets and removability for
              {L}ipschitz harmonic functions in {H}eisenberg groups},
   JOURNAL = {J. Reine Angew. Math.},
  FJOURNAL = {Journal f\"ur die Reine und Angewandte Mathematik. [Crelle's
              Journal]},
    VOLUME = {691},
      YEAR = {2014},
     PAGES = {29--60},
     }
     
       \bib{CMT}{article}{
 AUTHOR = {Chousionis, V.},
  AUTHOR = {Magnani, V.},
   AUTHOR = {Tyson, J. T.},
       TITLE = {Removable sets for {L}ipschitz harmonic functions on {C}arnot
              groups},
   JOURNAL = {Calc. Var. Partial Differential Equations},
  FJOURNAL = {Calculus of Variations and Partial Differential Equations},
    VOLUME = {53},
      YEAR = {2015},
    NUMBER = {3-4},
     PAGES = {755--780},
     }
     
        \bib{CFO}{article}{
 AUTHOR = {Chousionis, V.},
 AUTHOR={Fassler, K. },
 AUTHOR={Orponen, T.},
     TITLE = {Intrinsic Lipschitz graphs and vertical $\beta$-numbers in the Heisenberg group ''}
   JOURNAL = {preprint},
NOTE={\url{http://arxiv.org/abs/1606.07703}}
     }
     
        \bib{chun}{article}{
 AUTHOR = {Chunaev, P.},
     TITLE = {A new family of singular integral operators whose L2-boundedness implies rectifiability},
   JOURNAL = {preprint}
 NOTE={\url{   http://arxiv.org/abs/1601.07319}}
 
     }
     
     \bib{chumt}{article}{
 AUTHOR = {Chunaev, P.},
 AUTHOR={Mateu, J. },
 AUTHOR={Tolsa, X.},
     TITLE = {Singular integrals unsuitable for the curvature method whose $L^2$-boundedness still implies rectifiability },
   JOURNAL = {preprint},
NOTE={\url{http://arxiv.org/abs/1607.07663}}
     }

\bib{christ}{article}{
  title = {A $T(b)$ theorem with remarks on analytic capacity and the Cauchy integral},
  author = {Christ, M.},
  journal = {Colloq. Math.},
  volume = {60/61},
  number = {2},
  pages = {601-628},
  year = {1990},
}

\bib{Dsuff}{article}{
  title = {Morceaux de graphes Lipschitziens et int\'egrales singuli\`eres sur un surface},
  author = {David, G.},
  journal = {Rev. Mat. Iberoam.},
  volume = {4},
  number = {1},
  pages = {73-114},
  year = {1988},
}

\bib{david-wavelets}{book}{
  title = {Wavelets and singular integrals on curves and surfaces},
  author = {David, G.},
  series = {Lecture Notes in Mathematics},
  volume = {1465},
  publisher = {Springer-Verlag},
  year = {1991},
  }
  
\bib{Da3}{article}{
    AUTHOR = {David, G.},
     TITLE = {Unrectifiable {$1$}-sets have vanishing analytic capacity},
   JOURNAL = {Rev. Mat. Iberoamericana},
  FJOURNAL = {Revista Matem\'atica Iberoamericana},
    VOLUME = {14},
      YEAR = {1998},
    NUMBER = {2},
     PAGES = {369--479},
}

\bib{DM}{article}{
  AUTHOR = {David, G.},
 AUTHOR={Mattila, Pertti},
     TITLE = {Removable sets for {L}ipschitz harmonic functions in the
              plane},
   JOURNAL = {Rev. Mat. Iberoamericana},
    VOLUME = {16},
      YEAR = {2000},
    NUMBER = {1},
     PAGES = {137--215},
 }
 
 \bib{denghan}{book}{
  AUTHOR = {Deng, D.}
  AUTHOR= {Han, Y.},
     TITLE = {Harmonic analysis on spaces of homogeneous type},
    SERIES = {Lecture Notes in Mathematics},
    VOLUME = {1966},
 PUBLISHER = {Springer-Verlag, Berlin},
      YEAR = {2009},
     PAGES = {xii+154},
     }

 \bib{EiV}{article}{
  AUTHOR = {Eiderman, V.},
 AUTHOR={Volberg, A},
     TITLE = {Nonhomogeneous harmonic analysis: 16 years of development},
   JOURNAL = {Uspekhi Mat. Nauk},
    VOLUME = {68},
      YEAR = {2013},
    NUMBER = {6(414)},
     PAGES = {3--58},
 }

 \bib{FolS}{book}{
  AUTHOR = {Folland, G. B.}
  AUTHOR={Stein, E. M.},
     TITLE = {Hardy spaces on homogeneous groups},
    SERIES = {Mathematical Notes},
    VOLUME = {28},
 PUBLISHER = {Princeton University Press, Princeton, N.J.; University of
              Tokyo Press, Tokyo},
      YEAR = {1982},
}

  \bib{hahlomaa-2}{article}{
     AUTHOR = {Hahlomaa, I.},
     TITLE = {Menger curvature and Lipschitz parameterizations in metric spaces},
   JOURNAL = {Fund. Math.},
    VOLUME = {185},
    NUMBER = {2},
      YEAR = {2005},
     PAGES = {143--169},
     }

  \bib{hahlomaa}{article}{
     AUTHOR = {Hahlomaa, I.},
     TITLE = {Curvature integral and {L}ipschitz parametrization in
              1-regular metric spaces},
   JOURNAL = {Ann. Acad. Sci. Fenn. Math.},
    VOLUME = {32},
      YEAR = {2007},
    NUMBER = {1},
     PAGES = {99--123},
     }
 
    \bib{huo}{article}{
    AUTHOR = {Huovinen, P.},
     TITLE = {A nicely behaved singular integral on a purely unrectifiable
              set},
   JOURNAL = {Proc. Amer. Math. Soc.},
  FJOURNAL = {Proceedings of the American Mathematical Society},
    VOLUME = {129},
      YEAR = {2001},
    NUMBER = {11},
     PAGES = {3345--3351},
}

     \bib{jnaz}{article}{
 AUTHOR = {Jaye, B.},
 AUTHOR={Nazarov, F. },
     TITLE = {Three revolutions in the kernel are worse than one},
   JOURNAL = {preprint}
   NOTE={\url{http://arxiv.org/abs/1307.3678}}
     }

     \bib{li-schul-1}{article}{
       AUTHOR = {Li, S.},
       AUTHOR={Schul, R.},
     TITLE = {An upper bound for the length of a traveling salesman path in
              the {H}eisenberg group},
   JOURNAL = {Rev. Mat. Iberoam.},
  FJOURNAL = {Revista Matem\'atica Iberoamericana},
    VOLUME = {32},
      YEAR = {2016},
    NUMBER = {2},
     PAGES = {391--417},
     }
     
      \bib{li-schul-2}{article}{
        AUTHOR = {Li, S.},
       AUTHOR={Schul, R.},
     TITLE = {The traveling salesman problem in the {H}eisenberg group:
              upper bounding curvature},
   JOURNAL = {Trans. Amer. Math. Soc.},
  FJOURNAL = {Transactions of the American Mathematical Society},
    VOLUME = {368},
      YEAR = {2016},
    NUMBER = {7},
     PAGES = {4585--4620}
     }

\bib{MMV}{article}{
 AUTHOR = {Mattila, P.},
 AUTHOR={Melnikov, M.},
 AUTHOR={Verdera, J.},
     TITLE = {The {C}auchy integral, analytic capacity, and uniform
              rectifiability},
   JOURNAL = {Ann. of Math. (2)},
  FJOURNAL = {Annals of Mathematics. Second Series},
    VOLUME = {144},
      YEAR = {1996},
    NUMBER = {1},
     PAGES = {127--136}
}
\bib{MeV}{article}{
AUTHOR = {Melnikov, M.},
 AUTHOR={Verdera, J.},
     TITLE = {A geometric proof of the {$L^2$} boundedness of the
              {C}auchy integral on {L}ipschitz graphs},
   JOURNAL = {Internat. Math. Res. Notices},
  FJOURNAL = {International Mathematics Research Notices},
      YEAR = {1995},
    NUMBER = {7},
     PAGES = {325--331}
     }
    \bib{NToV1}{article}{ 
    AUTHOR = {Nazarov, F.},
 AUTHOR={Tolsa, X.},
 AUTHOR={Volberg, A. },
     TITLE = {On the uniform rectifiability of {AD}-regular measures with
              bounded {R}iesz transform operator: the case of codimension 1},
   JOURNAL = {Acta Math.},
  FJOURNAL = {Acta Mathematica},
    VOLUME = {213},
      YEAR = {2014},
    NUMBER = {2},
     PAGES = {237--321}
     }
     
    \bib{NToV2}{article}{ 
    AUTHOR = {Nazarov, F.},
 AUTHOR={Tolsa, X.},
 AUTHOR={Volberg, A. },
     TITLE = {The {R}iesz transform, rectifiability, and removability for
              {L}ipschitz harmonic functions},
   JOURNAL = {Publ. Mat.},
  FJOURNAL = {Publicacions Matem\`atiques},
    VOLUME = {58},
      YEAR = {2014},
    NUMBER = {2},
     PAGES = {517--532},
}

\bib{Pan1}{article}{
AUTHOR = {Pansu, P.},
     TITLE = {G\'eom\'etrie du groupe d' Heisenberg} , 
   JOURNAL = {Th\'ese pour le titre de Docteur de 3\'eme cycle,  Universit\'e Paris VII},
 
      YEAR = {1982},
  
}

\bib{Pan2}{article}{
AUTHOR = {Pansu, P.},
     TITLE = {Une in\'egalit\'e isop\'erim\'etrique sur le groupe de
              {H}eisenberg},
   JOURNAL = {C. R. Acad. Sci. Paris S\'er. I Math.},
    VOLUME = {295},
      YEAR = {1982},
    NUMBER = {2},
     PAGES = {127--130},
}

\bib{Pansu}{article}{
 AUTHOR = {Pansu, P.},
     TITLE = {M\'etriques de {C}arnot-{C}arath\'eodory et quasiisom\'etries
              des espaces sym\'etriques de rang un},
   JOURNAL = {Ann. of Math. (2)},
  FJOURNAL = {Annals of Mathematics. Second Series},
    VOLUME = {129},
      YEAR = {1989},
    NUMBER = {1},
     PAGES = {1--60},
}
\bib{St}{book}{
    AUTHOR = {Stein, E. M.},
     TITLE = {Harmonic analysis: real-variable methods, orthogonality, and
              oscillatory integrals},
    SERIES = {Princeton Mathematical Series},
    VOLUME = {43},
      NOTE = {With the assistance of Timothy S. Murphy,
              Monographs in Harmonic Analysis, III},
 PUBLISHER = {Princeton University Press, Princeton, NJ},
      YEAR = {1993},
     PAGES = {xiv+695}
}

\bib{Tolsa}{article}{
   AUTHOR = {Tolsa, X.},
     TITLE = {Uniform rectifiability, {C}alder\'on-{Z}ygmund operators with
              odd kernel, and quasiorthogonality},
   JOURNAL = {Proc. Lond. Math. Soc. (3)},
  FJOURNAL = {Proceedings of the London Mathematical Society. Third Series},
    VOLUME = {98},
      YEAR = {2009},
    NUMBER = {2},
     PAGES = {393--426},
}

\bib{tolsabook}{book}{
    AUTHOR = {Tolsa, X.},
     TITLE = {Analytic capacity, the {C}auchy transform, and non-homogeneous
              {C}alder\'on-{Z}ygmund theory},
    SERIES = {Progress in Mathematics},
    VOLUME = {307},
 PUBLISHER = {Birkh\"auser/Springer, Cham},
      YEAR = {2014},
     PAGES = {xiv+396}
}


\end{biblist}
\end{bibdiv}

\end{document}